\PassOptionsToPackage{dvipsnames,svgnames}{xcolor}
\documentclass{article} 
\usepackage{iclr2027_conference,times}

\usepackage{amsmath,amsfonts,bm}

\def\eqref#1{equation~\ref{#1}}

\def\1{\bm{1}}

\DeclareMathAlphabet{\mathsfit}{\encodingdefault}{\sfdefault}{m}{sl}
\SetMathAlphabet{\mathsfit}{bold}{\encodingdefault}{\sfdefault}{bx}{n}

\DeclareMathOperator*{\argmax}{arg\,max}

\makeatletter
\renewcommand{\eqref}[1]{\textup{\tagform@{\ref{#1}}}}  
\makeatother
   
\usepackage{hyperref}
\usepackage{url}

\usepackage[utf8]{inputenc}
\usepackage[T1]{fontenc}
\usepackage{hyperref}
\usepackage{url}
\usepackage{booktabs}
\usepackage{amsfonts}
\usepackage{amsmath}
\usepackage{amssymb}
\usepackage{amsthm}
\newtheorem{proposition}{Proposition}
\newtheorem*{propositionrestate}{Proposition \ref*{prop:feasibility} (restated)}
\usepackage{nicefrac}
\usepackage{microtype}
\usepackage{xcolor}
\usepackage{graphicx}
\usepackage{multirow}
\usepackage{algorithm}
\usepackage[noend]{algorithmic}
\usepackage{tikz}
\usepackage{subcaption}
\usepackage{enumitem}
\usetikzlibrary{shapes.geometric,arrows.meta,decorations.pathreplacing,positioning,calc}

\newcommand{\INPUT}{\REQUIRE}
\newcommand{\OUTPUT}{\ENSURE}

\usepackage{colortbl}
\usepackage{pgfplots}
\usepgfplotslibrary{groupplots,statistics}
\usetikzlibrary{calc}
\pgfplotsset{compat=1.18}

\providecommand{\argmax}{\operatorname*{argmax}}

\title{Optimization for One, Feasibility for Most: Decoupling Optimization from Feasibility in Chance-Constrained Programs}

\iclrfinalcopy

\author{
  Justin Dumouchelle\thanks{Corresponding author: \href{justin.dumouchelle@ucalgray.ca}{justin.dumouchelle@ucalgary.ca}} \\
  University of Calgary 
  \And
  Shunyu Yao \\
  University of Iowa 
   \And
  Beste Basciftci \\
  University of Iowa  
  \And
  Thiago Serra \\
  University of Iowa 
}

\newcommand{\methodname}{ACCESS}
\newcommand{\method}{\methodname\xspace}

\newcommand{\methoddelta}{\methodname($\delta$)\xspace}
\usepackage{xspace}

\begin{document}


\maketitle
\lhead{Preprint.}

\begin{abstract}

Chance-constrained stochastic programs (CCSPs) require solutions that 
satisfy uncertain constraints with high probability. 
Standard approaches either embed scenarios directly into optimization by considering sample average approximation (SAA)-based reformulations, 
growing prohibitively expensive and often failing to find feasible solutions; 
or employ conservative reformulations that guarantee feasibility but sacrifice objective quality by design. 
We propose \method (Adaptive Chance-Constrained Exploration via Structured Sampling), 
a structured sampling framework that decouples optimization from feasibility evaluation.
First, high-quality candidate solutions are generated by
solving many inexpensive single-scenario subproblems,
avoiding sampling directly in the constrained high-dimensional solution space.
Second,
feasibility is assessed separately via large-scale batched simulation over a massive number of scenarios, which is naturally amenable to GPU parallelism.
That reduces the search from a high-dimensional solution space to a two-dimensional parameter space controlling \emph{conservatism} and \emph{scenario diversity}; 
while concentrating on candidates near the feasibility boundary, where high-quality feasible solutions lie. 
We also introduce a variant, \methoddelta, which provides certified feasibility guarantees with probability $1{-}\delta$.
Experiments across three benchmark classes and five scenario distributions show that \method matches
or exceeds the best objective found by any method in nearly every setting, while \methoddelta is
empirically feasible on every instance. The conservative reformulations remain several percent worse in
objective, and SAA is frequently infeasible while taking an order of magnitude longer on the harder
benchmarks.

\end{abstract}

\section{Introduction}
\label{sec:intro}

Real-world decision-making under uncertainty routinely demands solutions that
are reliable with high probability. For instance, a production planner must ensure supply
meets demand in at least $90\%$ of scenarios; an energy grid operator must
maintain feasibility across $95\%$ of demand realizations; a logistics provider
must meet delivery commitments in all but rare cases \citep{shapiro2009lectures, kuccukyavuz2022chance}. 
These settings are
naturally modeled as \emph{chance-constrained stochastic programs} (CCSPs), which require finding a solution $\mathbf{x}$ from a set $\mathcal{X} \subseteq \mathbb{R}^n$ that satisfies a constraint with uncertain parameters with confidence level $\alpha \in (0,1)$; 
which is chosen as a large value by risk-averse decision makers, e.g., $\alpha \geq 0.90$. 
CCSPs are computationally intractable 
as their feasible regions are nonconvex and feasibility is expensive to evaluate except for special cases. They become especially challenging when the decisions become combinatorial, i.e., 
$\mathbf x \in \mathcal{X} \subseteq \{0,1\}^n$ (binary) or $\mathbf x \in \mathcal{X} \subseteq \mathbb{Z}^n$  
(integer). 
Hence, we evaluate feasibility empirically and refer to a solution as \emph{$\alpha$-feasible} if it satisfies all uncertain constraints in at least an $\alpha$ fraction of the evaluation scenarios. 

The dominant approach, 
\emph{sample average approximation} (SAA),
replaces the probability in the chance constraint with an empirical estimator based on an independently and identically distributed (i.i.d.) set of scenarios 
and solves a monolithic large-scale optimization problem consisting of a mixed-integer linear program (MILP) as its approximate representation. SAA enjoys strong
theoretical guarantees \citep{luedtke2008sample, Pagnoncelli2009}, but its computational cost
grows rapidly with the size of the sample set 
and the problem size. Reliably estimating a
99th-percentile constraint requires thousands of scenarios for discerning between feasible and infeasible solutions around the optimal value, making the
corresponding MILP intractable for combinatorial problems of practical interest.
An alternative class of methods, safe approximations, replaces the chance constraint with a tractable convex surrogate that guarantees feasibility by construction~\citep{rockafellar2000optimization,nemirovski2006convex, chen2007robust}. However, this conservatism comes at a cost as solutions are often suboptimal.
Both SAA and safe approximations are \emph{single-shot} methods, i.e., they solve a single optimization problem and thus produce a single solution.  

We take a fundamentally different approach.  Rather than solving once, we \emph{sample} many candidate solutions at varying conservatism levels and select the best feasible one.
Our framework, \method (Adaptive Chance-Constrained Exploration via Structured Sampling), decouples optimization from feasibility evaluation.
Rather than embedding scenarios into a monolithic optimization problem, we solve inexpensive single-scenario subproblems parameterized by two scalars: (1) a conservatism level $\gamma$ that controls how tightly constraints are enforced, and (2) a scenario magnitude $\beta$ that interpolates between nominal (deterministic) and diverse sampled scenarios.
Each $(\gamma, \beta)$ pair yields a candidate solution and feasibility is then assessed separately via batched evaluation over a large validation set.
Bayesian Optimization (BO) navigates this 2D parameter space to concentrate candidates near the feasibility boundary, where high-quality feasible solutions lie.
This approach exploits parallelism at two levels: many small subproblems can be solved independently across CPU cores, and feasibility evaluation over millions of scenarios parallelizes efficiently on GPUs for rapid best-of-$N$ selection.

{Figure~\ref{fig:sampling_comparison} illustrates this on instances of the chance-constrained knapsack problem.
In (a), SAA reaches high objective values but often violates the chance constraint whereas
conservative methods are feasible but lower in objective, whereas \method bridges the gap between them. In (b), the
candidates produced by random $(\gamma,\beta)$ pairs trace the trade-off between solution feasibility and
quality, with the search in \method\, concentrated on candidates where that trade-off meets $\alpha$-feasibility.}

\begin{figure}[h]
  \centering
  \includegraphics[width=\textwidth]{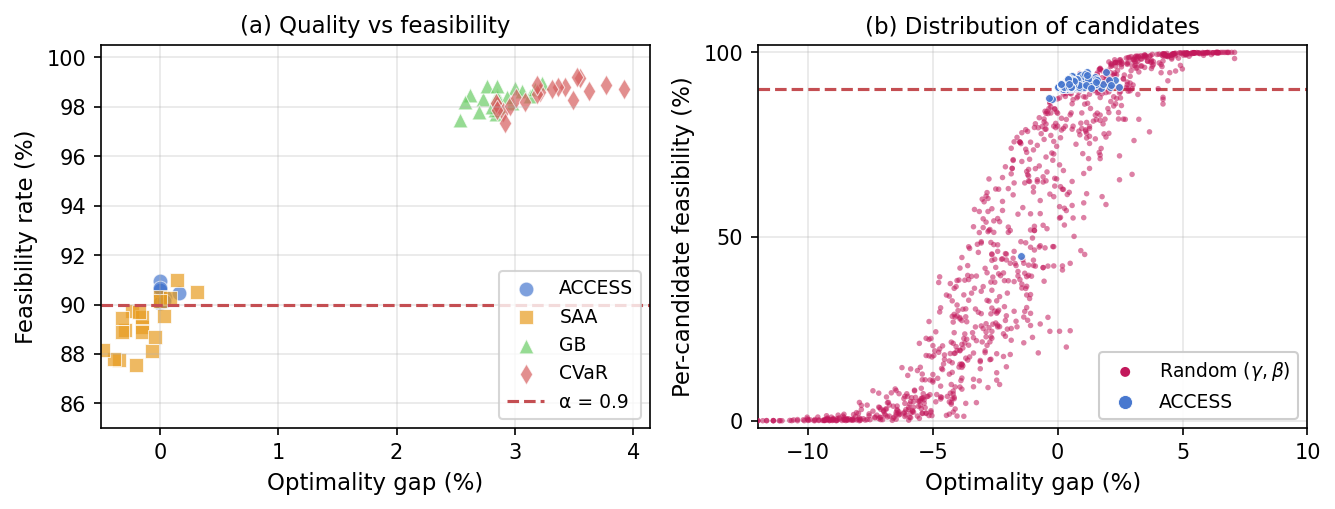}
  \caption{\textbf{(a)} Optimality gap versus feasibility rate on the largest knapsack instances at $\alpha{=}0.90$, one point
    per instance.  Gap is relative to the best $\alpha$-feasible objective found by any method, so a
    solution that beats it while violating the chance constraint appears at a negative gap. GB and CVaR denote the Gaussian--Bonferroni and CVaR safe approximations, respectively; all baselines are introduced in  Section~\ref{sec:results} and Appendix~\ref{app:baselines} 
    {\textbf{(b)} Feasibility and optimality gap of $1{,}000$ candidates from random $(\gamma,\beta)$
pairs and of the $1{,}000$ \method candidates, on a single knapsack instance.} 
    }
  \label{fig:sampling_comparison}
\end{figure}

\vspace{-0.2cm}
\textbf{Contributions.}
\begin{enumerate}[leftmargin=*]
\vspace{-0.2cm}
  \item \textbf{A two-parameter subproblem family, searched with BO.}
    We define a family of single-scenario MILP subproblems parameterized
    by a tightening parameter $\gamma$ and a diversification parameter $\beta$, 
    whose endpoints are the nominal scenario used
    in robust approximations ($\beta{=}0$) 
    and each realized scenario ($\beta{=}1$). 
    For each
    scenario, constrained BO searches the space $(\gamma, \beta)$, leaving the combinatorial structure
    to the solver and concentrating candidates near the feasibility boundary.

    \item \textbf{Distribution-free feasibility guarantees.} In classic scenario-based methods, e.g., SAA/CVaR, the scenarios that back the feasibility guarantee sit inside the optimization model, so their number is limited by what the solver can handle. \method\ moves the guarantee to the selection step: candidates are generated without the scenarios on which they are selected, and checking a candidate is cheap and amenable to GPU parallelism, providing strong guarantees that can be computed efficiently.  In addition, this yields a certified variant, \methoddelta, that returns an $\alpha$-feasible solution with probability at least $1-\delta$ for any distribution that can be sampled.

  \item \textbf{Comprehensive evaluation.}
    Across knapsack, matching and shortest-path benchmarks under five distributions, \method and
    \methoddelta return $\alpha$-feasible solutions at a worst-case mean gap of $0.04\%$ and $0.64\%$ to the
    best objective found by any method, while SAA is frequently infeasible and the safe
    approximations are overly conservative.  Moreover, solutions are found in a fraction of the time
    SAA requires, demonstrating the computational efficiency of the proposed approach.
\end{enumerate}
\vspace{-0.2cm}

\section{Preliminaries} 
\label{sec:prelim}

\subsection{Chance-Constrained Stochastic Programs}

A \emph{chance-constrained stochastic program} (CCSP) takes the form
\begin{subequations}
\begin{align}
  \max_{\mathbf{x} \in \mathcal{X}} \quad & f(\mathbf{x}) \label{eq:ccsp-obj} \\
  \text{s.t.} \quad & \text{Pr}_{\boldsymbol{\xi}}(\mathbf{x} \in \mathcal{P}(\boldsymbol{\xi})) \geq \alpha, %
  \label{eq:cc}
\end{align} \label{eq:ccsp}
\end{subequations}
where $\mathcal{X} \subseteq \mathbb{Z}^p \times \mathbb{R}^{n-p}$ is a deterministic feasible set on integer and continuous decision variables,
$f : \mathcal{X} \rightarrow \mathbb{R}$ is the objective function, 
and the chance constraint represents the set of feasible solutions $\mathcal{P}(\boldsymbol{\xi})$ under a random vector $\boldsymbol{\xi}$ with probability distribution $\mathbb{P}$ and confidence level $\alpha \in (0,1)$. 
In this work, 
we mainly focus on linear chance constraints by considering a polyhedral representation $\mathcal{P}(\boldsymbol{\xi}) := \{\mathbf{x}:\mathbf{A}(\boldsymbol{\xi})\mathbf{x} \le \mathbf{b}(\boldsymbol{\xi})\}$, where $\mathbf{A}(\boldsymbol{\xi}) \in \mathbb{R}^{d \times n}$ is a coefficient matrix, 
in which the $i$-th row is $\mathbf{a}_i(\boldsymbol{\xi}) \in \mathbb{R}^n$, 
and $\mathbf{b}(\boldsymbol{\xi}) \in \mathbb{R}^d$ is the vector of right-hand sides. 
For the nominal data, 
we use $\bar{\mathbf{A}} = \mathbb{E}_{\boldsymbol{\xi}}[\mathbf{A}(\boldsymbol{\xi})]$, 
in which the $i$-th row is $\bar{\mathbf{a}}_i$, 
and $\bar{\mathbf{b}} = \mathbb{E}_{\boldsymbol{\xi}}[\mathbf{b}(\boldsymbol{\xi})]$. 
This setting covers a broad class of optimization problems, such as knapsack, matching, and routing;  
and allows us to consider both individual  ($d=1$) and joint chance constraints ($d>1$), 
with all $d$ constraints holding \emph{simultaneously}.

\textbf{Solution approaches.}
In the widely used Sample Average Approximation (SAA), the probability over $\boldsymbol{\xi}$ is replaced with an empirical estimate over a set of scenarios 
$\mathcal{S} :=\{\boldsymbol{\xi}^{(i)}\}_{i=1}^{|\mathcal{S}|} \overset{\mathrm{i.i.d.}}{\sim} \mathbb{P}$, yielding
\begin{equation}
\small
\max_{\mathbf{x} \in \mathcal{X}}
\left\{
f(\mathbf{x}) :
\frac{1}{|\mathcal{S}|}\sum_{i=1}^{|\mathcal{S}|}
\mathbb{I}\left\{\mathbf{A}(\boldsymbol{\xi}^{(i)})\mathbf{x} \le \mathbf{b}(\boldsymbol{\xi}^{(i)})\right\}
\ge \alpha
\right\}.
\label{eq:general-saa}
\end{equation}
The indicator $\mathbb{I}$ equals one when $\mathbf{x}$ satisfies all constraints in a given scenario 
and zero otherwise. 
Hence, the SAA formulation requires simultaneous satisfaction of all constraints in at least $\lceil\alpha|\mathcal{S}|\rceil$ sampled scenarios. 
For the linear models considered in this paper, the polyhedral structure of 
$\mathbf{A}(\boldsymbol{\xi}^{(i)})\mathbf{x} \le \mathbf{b}(\boldsymbol{\xi}^{(i)})$ 
allows \eqref{eq:general-saa} to be reformulated as an equivalent MILP using binary variables and valid big-$M$ constraints, as detailed in Appendix~\ref{app:saa_formulation}.

Safe approximations replace the chance constraint~\eqref{eq:cc} with deterministic sufficient conditions, often expressed as convex constraints, yielding
\begin{equation}
\max_{\mathbf{x} \in \mathcal{X}}
\left\{ f(\mathbf{x}) : \mathbf{x} \in \mathcal{X}_{\mathrm{safe}} \right\},
\label{eq:general-safe}
\end{equation}
where $
\mathcal{X}_{\mathrm{safe}}
\subseteq
\left\{
\mathbf{x} \in \mathcal{X} :
\text{Pr}_{\boldsymbol{\xi}}(\mathbf{x} \in \mathcal{P}(\boldsymbol{\xi})) \geq \alpha
\right\}.$
Thus, every feasible solution of the safe approximation is feasible for the original problem, 
but some feasible solutions of the original problem may be excluded ($x \in \mathcal{X} :\text{Pr}_{\boldsymbol{\xi}}(\mathbf{x} \in \mathcal{P}(\boldsymbol{\xi})) \geq \alpha \not\Rightarrow x \in \mathcal{X}_{\mathrm{safe}}$); thereby affecting the quality of the solution obtained. 
Examples of safe approximations include CVaR and Bernstein approximations, as well as approaches that combine individual safe approximations through Bonferroni inequalities. We use two representative safe approximations, Gaussian--Bonferroni (GB) and CVaR, as benchmarks for our method. Details are provided in Section~\ref{sec:overview} and Appendix~\ref{app:safe_approx}.

\textbf{Evaluating feasibility.}
Computing the exact probability in~\eqref{eq:cc} is generally intractable. 
Hence, we use an empirical feasibility rate defined  
over a set of scenarios 
$\mathcal{T} :=\{\boldsymbol{\xi}^{(i)}\}_{i=1}^{|\mathcal{T}|} \overset{\mathrm{i.i.d.}}{\sim} \mathbb{P}$ 
as
\begin{equation}
p_\mathcal{T}(\mathbf{x}) = \frac{1}{|\mathcal{T}|}\sum_{i=1}^{|\mathcal{T}|} \mathbb{I}\left\{\mathbf{A}(\boldsymbol{\xi}^{(i)})\mathbf{x} \le \mathbf{b}(\boldsymbol{\xi}^{(i)})\right\},
\label{eq:emp-cc}
\end{equation}
and consider $\mathbf{x}$ to be \textit{$\alpha$-feasible} if $p_\mathcal{T}(\mathbf{x}) \geq \alpha$. 
This is standard practice in stochastic optimization. 

We note that this is the same approach used in SAA, 
but sets $\mathcal{S}$ and $\mathcal{T}$ are used for different purposes and at different times: 
the first to find a solution and the second to evaluate if the solution is indeed feasible. 
Drawing a parallel with machine learning tasks, $\mathcal{S}$ and $\mathcal{T}$ are akin to training and test sets. 

{Throughout the paper, multiple scenario sets such as $\mathcal{S}$ and $\mathcal{T}$ appear.  Each is an independent i.i.d.\ sample from $\mathbb{P}$, 
but to simplify notation we uniformly refer to their elements as $\xi^{(1)}, \xi^{(2)}, $ and so on.}

\subsection{Related Work}
\label{sec:review}

\textbf{Optimization under uncertainty.}
Chance-constrained programs are challenging to solve due to expensive feasibility verification and nonconvex, possibly disconnected feasible regions~\citep{nemirovski2006convex,ahmed2013probabilistic,kuccukyavuz2022chance}.
Two main approaches exist: convex conservative approximations~\citep{nemirovski2006convex,chen2007robust,rockafellar2000optimization} and SAA~\citep{luedtke2008sample}.
Both are single-shot methods with fixed conservatism levels.
We build on these quantile-type approximations but treat their conservatism constant as searchable, enabling empirical navigation of the feasibility-optimality tradeoff.
Our parameterized search also relates to robust optimization~\citep{ben2009robust,bertsimas2004price}, where a scalar controls uncertainty-set size.
Closest to us, \citet{li2015optimal} and \citet{yuan2017robust} also tighten the nominal
formulation and search for the margin rather than fixing it with a bound, differing in how that search
is carried out.  Both return a single solution, whereas we sample the scenario that defines each
subproblem and certify the one we select.

\textbf{Learning to optimize.}
Learning-based methods for combinatorial optimization have largely focused on
improving optimization algorithms, for example through learned branching
policies~\citep{khalil2016learning,gasse2019} or neural solution
construction methods~\citep{kool2019attention}. More closely related to our work are recent sampling-based approaches, e.g.,
\citep{sun2023difusco,sun2023revisiting,ichikawa2025optimization}, which generate
diverse candidate solutions through stochastic search dynamics directly in the combinatorial solution space. 
Constraints enter these methods as penalties in the sampling dynamics, so $\alpha$-feasibility would require a differentiable surrogate for the empirical feasibility rate~\eqref{eq:emp-cc}.  They also target problems where the MILP solver itself is the bottleneck, whereas in a CCSP the bottleneck arises from coupling the scenarios in one model, as in SAA, and solving many deterministic subproblems independently is comparatively cheap.

\textbf{Constraint learning.}
A related direction embeds learned predictive models directly into
MILP formulations~\citep{maragno2025mixed}. Building on
this paradigm,~\citet{alcantara2023ccl} propose chance-constraint learning via
conditional quantile models, assuming access to data from the underlying
stochastic process. Learn-and-embed methods for stochastic and bilevel
optimization~\citep{dumouchelle2022neur2sp,dumouchelle2024neur2bilo} similarly
train value-function surrogates and embed them in optimization models, with
training data generated by sampling decisions randomly. Recent work addresses the
resulting data-collection issue by alternating surrogate optimization and
retraining on samples near incumbent solutions~\citep{kronqvist2023alternating}.
In contrast, our framework does not learn an embedded surrogate or use iterative
active data collection; instead, it directly generates boundary-focused
candidate solutions through structured problem-space sampling. 
This distinction matters in combinatorial CCSPs since the training data must be generated, and random
solutions rarely fall near the feasibility boundary (Appendix~\ref{app:feas_distributions}), so a
learned constraint would first need a boundary-focused sampler, which is the problem \method
addresses.  

\section{Methodology}
\label{sec:method}

\method decouples optimization from feasibility evaluation: instead of solving one analytically
tightened problem, it \emph{searches} empirically over subproblems of varying conservatism ($\gamma$)
and diversity ($\beta$), as outlined in Figure~\ref{fig:pipeline}. 
In what follows, we describe the subproblem family (Section~\ref{sec:overview}), searching over it (Section~\ref{sec:boccp}), batched evaluation (Section~\ref{sec:oracle}), and a feasibility guarantee (Section~\ref{sec:guarantees}).

\begin{figure}[h!]
  \centering
  \includegraphics[width=\textwidth]{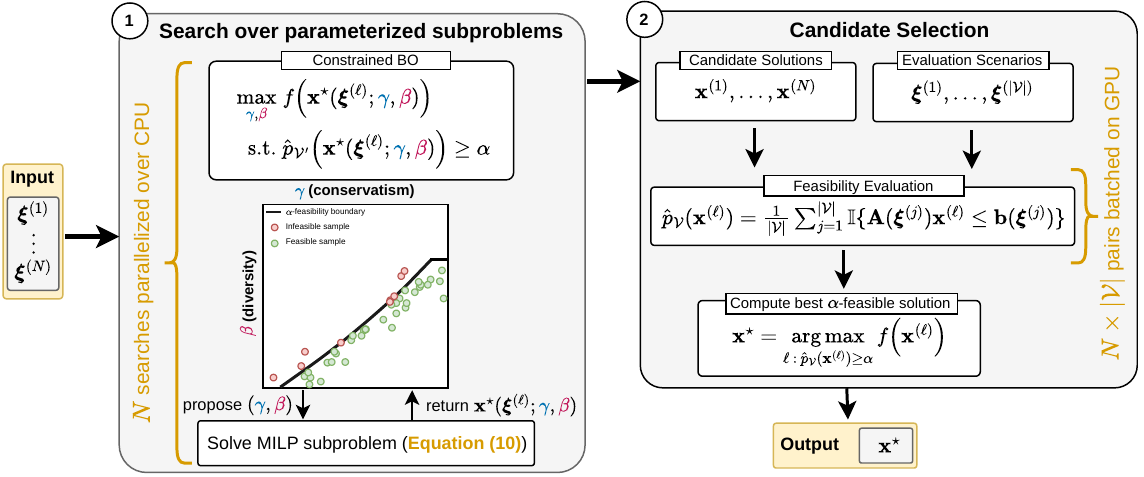}
  \caption{
The \method pipeline. 
(1) For each of the $N$ sampled scenarios, constrained BO searches over $(\gamma,\beta)$, in parallel on CPU.
(2) The resulting candidates $\mathbf{x}^{(1)},\dots,\mathbf{x}^{(N)}$ are scored against a set of validation scenarios $\mathcal{V}$ in one batched GPU pass, and the best $\alpha$-feasible solution is returned. \vspace{-1em}
  }
  \label{fig:pipeline}
\end{figure}

\subsection{Parameterized Subproblems}\label{sec:overview}

\method solves parameterized subproblems resembling two standard surrogates of the CCSP, the single-scenario problem and the safe approximation, to balance their opposite drawbacks.

\textbf{Single-scenario subproblems (and SAA).}
A natural heuristic is to sample a scenario $\boldsymbol{\xi} \sim \mathbb{P}$ and solve the deterministic problem induced by the realization of this scenario, i.e., 
\begin{equation}
\mathbf{x}^\star(\boldsymbol{\xi}) \in \argmax_{\mathbf{x} \in \mathcal{X}} \left\{
 f(\mathbf{x})
 \;:\; \mathbf{A}(\boldsymbol{\xi})\mathbf{x} \le \mathbf{b}(\boldsymbol{\xi})
\right\}.
\label{eq:subproblem_naive}
\end{equation}
Each deterministic solve is cheap and each draw $\boldsymbol{\xi}$ gives a different solution $\mathbf{x}^*(\boldsymbol{\xi})$, but this  solution is tailored to its scenario: 
some constraints might be binding ($\mathbf{a}_i(\mathbf{\xi})^T \mathbf{x}^*(\boldsymbol{\xi}) = b_i$ for some $i$), 
especially if the decision variables are continuous, 
and those likely violate the chance constraint under $\mathbb{P}$.
This is addressed in SAA by ensuring that the solution is feasible across a ratio $\alpha$ of a set of scenarios, 
but theoretical guarantees require a number of scenarios that is a computationally prohibitive.

\textbf{Safe approximations.}
Safe approximations instead solve a conservative formulation over the nominal problem.  
{Under multiplicative uncertainty, 
let us consider the component-wise setting in which 
$a_{i,j}(\boldsymbol{\xi}) = \bar{a}_{i,j}(1 + \sigma_i \xi_j)$, 
where $\bar{a}_{i,j} = E_{\boldsymbol{\xi}}[a_{i,j}(\boldsymbol{\xi})]$ 
and $\sigma_i$ is the relative standard deviation shared by all coefficients of row $i$. 
Using the element-wise (Hadamard) product, 
we denote the left-hand side of each constraint $i$ as 
$\mathbf{a}_i(\boldsymbol{\xi}) = \bar{\mathbf{a}}_i \odot
(\mathbf{1} + \sigma_i \boldsymbol{\xi})$, 
where $\mathbf{1}$ denotes the all-ones vector and $\boldsymbol{\xi} \in \mathbb{R}^n$ carries a noise
component per decision variable. 
For a fixed $\mathbf{x} = \mathbf{\tilde{x}}$,
the left-hand side of constraint $i$ is a random variable with mean
$\bar{\mathbf{a}}_i^\top \mathbf{\tilde{x}}$ 
and standard deviation $\sigma_i \|\bar{\mathbf{a}}_i \odot \mathbf{\tilde{x}}\|_2$. 
In the safe approximation, we solve for $\textbf{x}$ subject to
}
\begin{equation}
\bar{\mathbf{a}}_i^\top \mathbf{x} + \kappa\, \sigma_i \|\bar{\mathbf{a}}_i \odot \mathbf{x}\|_2 \leq \bar{b}_i,
\qquad \forall i = 1, \ldots, d,
\label{eq:gb_method}
\end{equation}
where $\kappa$ is selected to ensure feasibility under the scenario distribution $\mathbb{P}$ 
(see Appendix~\ref{app:safe_approx}). 
This approach is elegant for relying on a single parameter $\kappa$ 
thanks to $\sigma_i$ allowing us to scale the margin across constraints, 
but it is bound to yield suboptimal solutions because it is excessively conservative.

{\textbf{Striking a balance.} 
\methodname\, is derived from the approaches above in three steps. 
First, we circumvent the caveat of the safe approximation by working with an empirical parameter $\gamma$ replacing $\kappa$, 
meaning that we will search for the least conservative $\gamma$ whose solution remain $\alpha$-feasible on a large set of sampled scenarios. 
Because the norm in~\eqref{eq:gb_method} makes the subproblem a mixed-integer second-order cone program, 
we mitigate the cost of solving this subproblem multiple times by using the $L1$-norm instead. 
For simplicity, we assume $\mathbf{A}$ and $\mathbf{x}$ are non-negative, 
as they are in our benchmarks and in packing and covering problems generally. 
Under this assumption, 
$\|\bar{\mathbf{a}}_i \odot \mathbf{x}\|_2 \leq \|\bar{\mathbf{a}}_i \odot
\mathbf{x}\|_1 = \bar{\mathbf{a}}_i^\top \mathbf{x}$, 
 so replacing the norm by the linear term only tightens the constraint
and leaves a linear capacity, $(1 + \kappa\sigma_i)\,\bar{\mathbf{a}}_i^\top \mathbf{x}
\leq \bar{b}_i$, which is still a valid safe approximation in which 
\begin{equation}
(1 + \gamma\sigma_i)\,\bar{\mathbf{a}}_i^\top \mathbf{x} \leq \bar{b}_i
\qquad\text{or, equivalently,}\qquad
\bar{\mathbf{a}}_i^\top \mathbf{x} \leq \frac{\bar{b}_i}{1 + \gamma\sigma_i} \qquad \forall i = 1, \ldots, d.
\label{eq:l1_constraint}
\end{equation}

Second, 
we produce multiple solutions instead of a single one 
by solving the subproblem on a set of scenarios $\{\mathbf{\xi}^{(\ell)}\}_{\ell=1}^N$ 
instead of only the nominal case. 
For each scenario $\mathbf{\xi}$, the subproblem becomes 
\begin{equation}
\mathbf{x}^\star(\boldsymbol{\xi}; \gamma) \in \argmax_{\mathbf{x} \in \mathcal{X}} \left\{
f(\mathbf{x}) \;:\;
\mathbf{a}_i(\boldsymbol{\xi})^\top \mathbf{x} \leq \frac{b_i(\boldsymbol{\xi})}{1 + \gamma \hat{\sigma}_i} \> ~ ~ \forall i = 1,  \ldots, d
\right\},
\label{eq:subproblem_gamma}
\end{equation}
where $\hat{\sigma}_i$ is empirically estimated from sampled scenarios (see Appendix~\ref{app:sigma_estimate} for details). 
Similar to the single-scenario case, 
we obtain possibly different candidate solutions from different scenarios; 
whereas 
parameter $\gamma$ supplies the conservatism that the single-scenario case lacks 
and with feasibility defined on the joint constraints rather than through a union bound as in the safe approximation case.  
Moreover, 
keeping an estimate per row (constraint) rather than a single estimate across all rows improves solution quality when the rows carry
different noise levels, as measured in Appendix~\ref{app:het_ablation}. 

Third, 
we avoid excessive conservatism from imposing the scenario constraints  
by using an additional parameter $\beta$ to interpolate the constraints between the nominal problem and the realized scenario: 
\begin{equation}
\mathbf{a}_i(\boldsymbol\xi;\beta) = (1 - \beta)\,\bar{\mathbf{a}_i} + \beta\,\mathbf{a}_i(\boldsymbol{\xi}),
\qquad
{b}_i(\boldsymbol\xi;\beta) = (1 - \beta)\,\bar{{b}}_i + \beta\,{b}_i(\boldsymbol{\xi}),
\qquad \beta \in [0, 1].
\label{eq:rescaling}
\end{equation}
Hence, $\beta$
interpolates between the nominal constraints in~\eqref{eq:l1_constraint} ($\beta = 0$) and the realized scenario constraints in~\eqref{eq:subproblem_gamma} ($\beta = 1$), 
thereby spanning rather than extending either case. 
The mixture needs only that
$\boldsymbol{\xi}$ can be sampled and requires no further distributional assumptions. 
}

Therefore, the full parameterized subproblem in \methodname\, is formulated as
\begin{equation}
\mathbf{x}^\star(\boldsymbol{\xi}; \gamma, \beta) \in \argmax_{\mathbf{x} \in \mathcal{X}} \left\{
f(\mathbf{x}) \;:\;
{\mathbf{a}_i(\boldsymbol{\xi};\beta)}^\top \mathbf{x} \le \frac{b_i(\boldsymbol{\xi};\beta)}{1 + \gamma \hat{\sigma}_i} ~ ~ \forall i\in\{1,\ldots, d\}
\right\}.
\label{eq:2d_problem}
\end{equation}
In other words, 
we search over the two-dimensional \emph{parameter space} $(\gamma, \beta)$ 
instead of over the high-dimensional \emph{solution space} $\textbf{x}$. 
Parameter $\gamma$ controls \emph{conservatism}: a larger $\gamma$ tightens the capacity of the constraints. 
Parameter $\beta$ controls \emph{diversity}: a larger $\beta$ reshapes the constraints towards structurally different solutions. 
Searching over the parameter space leaves the combinatorial structure to the optimization solver: 
with $\gamma$ ordering the subproblems by conservatism and $\beta$ counterbalancing the spread across scenarios, 
the search concentrates candidates near the feasibility boundary.

\textbf{Assumptions and generalizations.}
The derivation assumed multiplicative independent perturbations and non-negative $\mathbf{a}_i$ and $\mathbf{x}$. 
These only motivate the parameterization: negative-valued data/decisions and additive rather than multiplicative
perturbations need only slight modifications to~\eqref{eq:2d_problem}, which stays linear in both cases.
The $\beta$ interpolation is natural for continuous distributions, but may require
alternative interpolation schemes for discrete distributions, e.g., Bernoulli.  
The guarantees of Section~\ref{sec:guarantees} require i.i.d.\ samples from $\mathbb{P}$ and nothing else, whereas safe approximations need distributional knowledge, e.g., a moment-generating-function bound for Bernstein, first and second moments for Cantelli~\citep{calafiore2006distributionally,nemirovski2006convex}). 

\subsection{Structured Sampling via Bayesian Optimization}\label{sec:boccp}

\method draws $N$ scenarios and runs an independent two-dimensional search for each one. Within a
scenario, the search maximizes the subproblem objective subject to maintaining $\alpha$-feasibility on a set 
of sample scenarios $\mathcal{V}'$. The set $\mathcal{V}'$ only scores the points within each
search, while the $N$ candidates the searches return are scored on a much larger set $\mathcal{V}$
that no search has seen, which is what allows the final selection to be certified
(Section~\ref{sec:guarantees}).

\textbf{Bayesian optimization.}
For each scenario, we model the $(\gamma, \beta) \to (\text{objective}, \text{feasibility})$
mapping as a constrained black-box problem and solve it with Gaussian process
surrogates~\citep{rasmussen2006gaussian} using constrained expected
improvement~\citep{balandat2020botorch, ament2023unexpected}, with a small set of quasi-random samples.
We search over $\beta\in[0,1]$ and $\gamma\in[0,\gamma_u]$, where
$\gamma_u$ is set per instance by solving the nominal problem over a coarse grid of
$\gamma$ and taking twice the smallest value whose solution is $\alpha$-feasible on $\mathcal{V}'$.  
Implementation details and the empirical behaviour of the search appear in
Appendices~\ref{app:hyperparameters} and
\ref{app:boccp_convergence}--\ref{app:boccp_param_analysis}.

\begin{algorithm}[h!]
\caption{\method: Adaptive Chance-Constrained Exploration via Structured Sampling}
\label{alg:access}
\begin{algorithmic}[1]
\footnotesize
\INPUT Distribution $\mathbb{P}$, confidence level $\alpha$, scenarios $N$, budget $N_{\text{init}} + N_{\text{iter}}$, scenario sets $\mathcal{V}', \mathcal{V}$
\OUTPUT $\alpha$-feasible solution $\mathbf{x}^*$
\STATE Draw scenarios $\{\boldsymbol{\xi}^{(\ell)}\}_{\ell=1}^{N} \sim \mathbb{P}$; $\;\mathcal{C} \leftarrow \emptyset$
\FOR{$j = 1, \ldots, N$ \textbf{in parallel}}
    \FOR{$t = 1, \ldots, N_{\text{init}} + N_{\text{iter}}$}
        \IF{$t \leq N_{\text{init}}$}
            \STATE $(\gamma_t, \beta_t) \leftarrow$ quasi-random sample from the $(\gamma, \beta)$ box
        \ELSE
            \STATE $(\gamma_t, \beta_t) \leftarrow$ constrained BO acquisition 
        \ENDIF
        \STATE Solve MILP \eqref{eq:2d_problem}: $\mathbf{x}_t \leftarrow \mathbf{x}^*(\boldsymbol{\xi}^{(\ell)}, \gamma_t, \beta_t)$ \hfill $\triangleright$ \textit{parametrized subproblems}
        \STATE Score $\hat{p}_{\mathcal{V}'}(\mathbf{x}_t)$; update BO
    \ENDFOR
    \STATE $\mathcal{C} \leftarrow \mathcal{C} \cup \arg\max\{f(\mathbf{x}) \;:\; \mathbf{x} \in \{\mathbf{x}_t\}^{N_{\text{init}} + N_{\text{iter}}}_{t=1},\; \hat{p}_{\mathcal{V}'}(\mathbf{x}) \geq \alpha\}$ \hfill $\triangleright$ \textit{best per scenario}
\ENDFOR
\STATE $\mathbf{x}^* \leftarrow \arg\max\{f(\mathbf{x}) \;:\; \mathbf{x} \in \mathcal{C},\; \hat{p}_{\mathcal{V}}(\mathbf{x}) \geq \alpha\}$ \hfill $\triangleright$ \textit{$\hat{p}_{\mathcal{V}}$ GPU-batched}
\RETURN $\mathbf{x}^*$
\end{algorithmic}
\end{algorithm}

\subsection{GPU-Accelerated Feasibility Evaluation}
\label{sec:oracle}

Feasibility evaluation decomposes across candidate–scenario pairs: each indicator $\mathbb{I}\{\mathbf{A}(\boldsymbol{\xi}^{(v)})\mathbf{x}^{(i)} \leq \mathbf{b}\}$ is independent.
We compute all $N \times |\mathcal{V}|$ indicators in a single batched GPU operation, enabling evaluation of $N = 1000$ candidates over $|\mathcal{V}| = 10^6$ scenarios in seconds.
This extends naturally to any risk measure that decomposes across scenarios, such as CVaR or variance-based objectives.
In two-stage stochastic programs, evaluating a candidate requires solving a recourse problem per scenario, generally precluding batched computation unless the recourse admits specific structure (e.g., dynamic programming representability~\citep{zhao2026from}).
Chance constraints require only indicator evaluation, making our setting trivially GPU-parallel, a key advantage of decoupling that our proposed pipeline leverages.

\subsection{Feasibility Guarantees}\label{sec:guarantees}

\method relies on empirical feasibility, which is only an estimate, whereas the true feasibility of the
returned solution can fall below estimates on $\mathcal{V}$. Decoupling optimization from
evaluation makes this shortfall easy to bound: since \method generates its $N$ candidates without
using $\mathcal{V}$, a union bound over those $N$ solutions suffices, and the shortfall is at most
a small $\epsilon_\delta$ with probability at least $1-\delta$, whichever candidate is selected.
Raising the selection threshold by $\epsilon_\delta$ therefore yields \methoddelta, whose returned
solution is feasible under the true distribution $\mathbb{P}$ with probability at least $1-\delta$.

\begin{proposition}\label{prop:feasibility}
Fix $\delta \in (0,1)$ and let $\hat{\mathbf{x}}_1, \ldots, \hat{\mathbf{x}}_N$ be generated
independently of $\mathcal{V}$. For each $j$, let $p_j = \Pr_{\boldsymbol{\xi}}(\mathbf{A}(\boldsymbol{\xi})\hat{\mathbf{x}}_j \leq \mathbf{b}(\boldsymbol{\xi}))$
be the true feasibility probability of $\hat{\mathbf{x}}_j$ and $\hat{p}_j$ its empirical rate on
$\mathcal{V}$. With probability at least $1 - \delta$ over the draw of $\mathcal{V}$,
\begin{equation}
p_j \geq \hat{p}_j - \sqrt{\frac{\ln(N/\delta)}{2|\mathcal{V}|}} \quad \text{simultaneously for every } j = 1, \ldots, N.
\label{eq:hoeffding_bound}
\end{equation}
\end{proposition}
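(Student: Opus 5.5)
The plan is a one-sided Hoeffding bound for each fixed candidate, followed by a union bound over the $N$ candidates. The independence hypothesis makes both steps legitimate.

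\textbf{Step 1: condition on the candidates.} Since $\hat{\mathbf{x}}_1,\ldots,\hat{\mathbf{x}}_N$ are generated independently of $\mathcal{V}$, we condition on their realized values and treat them as fixed. Under this conditioning the scenarios in $\mathcal{V}$ are still i.i.d.\ from $\mathbb{P}$. For each fixed $j$, set $Z^{(v)}_j = \mathbb{I}\{\mathbf{A}(\boldsymbol{\xi}^{(v)})\hat{\mathbf{x}}_j \le \mathbf{b}(\boldsymbol{\xi}^{(v)})\}$ for $v=1,\ldots,|\mathcal{V}|$. These are i.i.d.\ Bernoulli$(p_j)$ variables taking values in $[0,1]$, and $\hat p_j$ is their sample mean.

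\textbf{Step 2: one-sided Hoeffding for a single candidate.} For each fixed $j$ and any $\epsilon>0$, Hoeffding's inequality gives
\[
\Pr\bigl(\hat p_j - p_j \ge \epsilon\bigr) \le \exp\bigl(-2|\mathcal{V}|\epsilon^2\bigr).
\]

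\textbf{Step 3: union bound.} Summing over the $N$ candidates, the probability that some $j$ has $\hat p_j - p_j \ge \epsilon$ is at most $N\exp(-2|\mathcal{V}|\epsilon^2)$. Choose
\[
\epsilon = \sqrt{\frac{\ln(N/\delta)}{2|\mathcal{V}|}},
\]
which makes this bound equal to exactly $\delta$. On the complementary event we have $p_j > \hat p_j - \epsilon$ for every $j$ simultaneously, and this implies \eqref{eq:hoeffding_bound}.

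\textbf{Step 4: remove the conditioning.} The bound in Step 3 holds conditionally for every realization of the candidates. Taking expectation over the candidates' generation therefore gives the unconditional statement over the draw of $\mathcal{V}$.

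\textbf{Main obstacle.} The only delicate point is justifying Step 1: the candidates must be measurable with respect to randomness independent of $\mathcal{V}$. This holds because the BO search uses only $\mathcal{V}'$ and the scenarios $\boldsymbol{\xi}^{(\ell)}$, never $\mathcal{V}$. Without this, the $Z^{(v)}_j$ would not be i.i.d.\ given $\hat{\mathbf{x}}_j$, and the selection effect of choosing among candidates by their $\mathcal{V}$-scores would invalidate Hoeffding. The union bound is precisely what lets the guarantee survive the final data-dependent selection step.
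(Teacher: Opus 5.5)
Your proposal is correct and follows the paper's proof essentially step for step. You condition on the realized candidate pool so that each candidate's indicators on $\mathcal{V}$ are i.i.d.\ Bernoulli$(p_j)$, apply one-sided Hoeffding with $t=\sqrt{\ln(N/\delta)/(2|\mathcal{V}|)}$ to get failure probability $\delta/N$ per candidate, and finish with a union bound over the $N$ candidates; your explicit removal of the conditioning (Step 4) only spells out what the paper leaves implicit.
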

The candidates satisfy the condition because the search scores them on $\mathcal{V}'$, which is drawn
independently from $\mathcal{V}$. The proof is based on Hoeffding's inequality~\citep{hoeffding1963} with a
union bound over the $N$ candidates (see Appendix~\ref{app:hoeffding} for details).

\textbf{Certified selection: \methoddelta.}
Proposition~\ref{prop:feasibility} turns the empirical rate into a certificate. Selecting any
candidate with
\begin{equation}
\hat{p}_{\mathcal{V}}(\mathbf{x}) \geq \alpha + \epsilon_\delta, \qquad
\epsilon_\delta = \sqrt{\frac{\ln(N/\delta)}{2|\mathcal{V}|}},
\label{eq:certified_threshold}
\end{equation}
guarantees $p(\mathbf{x}) \geq \alpha$ with probability at least $1 - \delta$. We call this
variant \methoddelta: the same selection rule with the threshold raised by $\epsilon_\delta$.

\textbf{Comparison with SAA.}
SAA also offers a similar style of feasibility guarantee.  However, it embeds its scenario set in the optimization problem, so its guarantee must hold over every
decision in the sample-approximation feasible region, a multiplicity that can be exponential for
integer programs~\citep{luedtke2008sample}. Correcting over $N$ pre-generated candidates instead lets us
evaluate on very large scenarios sets, i.e., $|\mathcal{V}| = 10^6$ scenarios while keeping optimization cheap.
For example, with $N = 1000$ and $\delta = 0.05$ we get $\epsilon_\delta = 0.0022$. The guarantee is distribution-free and needs no parametric
assumption on $\mathbb{P}$.

\section{Computational Results}
\label{sec:results}

\vspace{-0.5em}
\textbf{Computing setup.} 
We used a machine with an NVIDIA H100 GPU and an Intel 6972P CPU with 64GB of RAM.  All MILPs were solved on Gurobi 12.0.0~\citep{gurobi}; all neural network implementations and GPU computations use PyTorch 2.5.0~\citep{paszke2019pytorch}.

\textbf{Benchmarks.}
We use three problem classes: (1) \textit{Knapsack (KP)}, selecting from $n$ items to
maximize profit under $d$ random capacity constraints; (2) \textit{Maximum Weight Matching (MWM)},
a maximum-weight matching on an $n \times n$ bipartite graph under $d$ random edge-weight
constraints; and (3) \textit{Resource-Constrained Shortest Path (SRCSP)}, a shortest path through a
graph with $|A|$ arcs under $d$ random resource constraints.
Each has three sizes (Small, Medium, Large): $(n,d)$ or $(|A|,d) \in \{(20,3), (50,5), (100,10)\}$
for KP and SRCSP, and $n \in \{10,15,20\}$ with $d = 5$ for MWM.
Each has multiplicative uncertainty,
$\mathbf{a}_i(\boldsymbol{\xi}) = \bar{\mathbf{a}}_i \odot \max(\mathbf{1} + \sigma\boldsymbol{\xi},\, 0)$
with $\sigma = 0.2$ and one i.i.d.\ $\xi_j \sim \mathcal{N}(0,1)$ per decision variable, shared
across the $d$ rows.
We then focus on the non-Gaussian case for KP at $n \in \{50, 100\}$, drawing $\boldsymbol{\xi}$
from four distributions: (1) heavy tails (\emph{Student-$t_3$}), (2) rare large shocks
(\emph{Jump}), (3) two correlated blocks of the most attractive items (\emph{Clustered}), and
(4) a fixed random 30\% of items carrying triple the noise (\emph{Risky subset}).
We also explore heterogeneous uncertainty by varying $\boldsymbol\sigma$ across constraints in three further
configurations (Appendix~\ref{app:het_ablation}).
For every configuration, we consider 20 instances and confidence levels
$\alpha \in \{0.90, 0.95, 0.99\}$, resulting in a total of 1{,}200 instances.
Formulations, distribution parameters, and generation procedures are in
Appendix~\ref{app:benchmarks}.

\textbf{Methods.}
We evaluate two variants of our approach: \method and \methoddelta with $\delta = 0.05$, which
returns a solution that is $\alpha$-feasible with probability at least $1 - \delta$ over the draw
of $\mathcal{V}$.
The two differ only in the selection rule and share the same candidate pool, generated from
$N = 1000$ scenarios with $N_{\text{init}} = 5$ initialization points and $N_{\text{iter}} = 10$ BO
iterations per scenario.
We use $|\mathcal{V}^\prime| = 10^4$ scenarios to score points within the BO and
$|\mathcal{V}| = 10^6$ for final selection.
Every subproblem solve is capped at 3\,s on a single thread, although most solves terminate in
orders of magnitude less time, and we parallelize across scenarios with 40 workers.
Reported runtimes cover both phases, the CPU subproblem solves and the GPU feasibility evaluation
over $\mathcal{V}$, and Appendix~\ref{app:detailed_runtime} reports both separately.

\textbf{Baselines.}
We compare against (1) the big-M sample average approximation
(SAA)~\citep{luedtke2008sample}, (2) the sample CVaR approximation applied per row with a Bonferroni
split~\citep{rockafellar2000optimization,nemirovski2006convex},
and
(3) the Gaussian--Bonferroni (GB) safe approximation, which is exact for each row under Gaussian
$\boldsymbol{\xi}$~\citep[Section~2.6]{lobo1998applications} and conservative for the
joint constraint through the Bonferroni split.  GB does not apply to the non-Gaussian benchmarks and
is omitted there.
The main results use $|\mathcal{S}| = 1000$ scenarios for SAA and CVaR, drawn independently of
$\mathcal{V}$ and $\mathcal{T}$; results for $|\mathcal{S}| \in \{10, 100, 10000\}$ are in
Appendix~\ref{app:baseline_analysis}.
All baseline solves use Gurobi with 6 threads and a 3600\,s time limit, returning the best
incumbent at the limit.
Baseline formulations and details are in Appendix~\ref{app:baselines}.

\textbf{Evaluation metrics.}
We evaluate feasibility using $\mathcal{T}=10^6$ sampled scenarios per instance via~\eqref{eq:emp-cc} (separate from the selection set $\mathcal{V}$ used in \method).
We report runtime, $\alpha$-feasibility rate (fraction of instances where the method finds an $\alpha$-feasible solution), and two gap metrics.  A per-benchmark runtime breakdown is in Appendix~\ref{app:detailed_runtime}.
\emph{FO-gap} (Table~\ref{tab:results}) measures the gap to the Best Known Objective (BKO). On each instance, $\mathrm{FO}_m$ is the objective value of the solution method $m$ returns, and BKO is the largest $\mathrm{FO}_m$ over the methods whose returned solution is $\alpha$-feasible on $\mathcal{T}$.  $\text{FO-gap}_m = (\mathrm{BKO}-\mathrm{FO}_m)/\mathrm{BKO}$ is reported only when the solution of $m$ is itself $\alpha$-feasible, and averaged over those instances.
\emph{Optimality gap} (Figure~\ref{fig:main_distributions}) uses the same reference as FO-gap, the best $\alpha$-feasible objective found by any method, so figures and tables agree; every solution is plotted, feasible or not, and an infeasible solution that beats the reference appears at a negative gap. This shows the full feasibility-quality tradeoff without a second definition of the gap.

\textbf{Results.}
Table~\ref{tab:results} summarizes results across all benchmarks with Gaussian uncertainty.
\method attains the best or tied-best FO-gap in 26 of the 27 settings, within $0.04\%$ of the best
known objective throughout. 
\method falls below $\alpha$ on a small minority of instances; \methoddelta is $\alpha$-feasible on
every instance, at a worst case of $0.64\%$.
SAA's gaps are averaged only over the instances where it returns a feasible solution, a subset that
falls to $0\%$ on the two larger KP sizes at $\alpha{=}0.99$, so the comparison flatters it.
The safe approximations invert the trade-off: GB is $\alpha$-feasible on every instance and CVaR on
all but three settings, but they give up $1.6$--$7.5\%$ and $0.9$--$10.4\%$ of objective
respectively.
Per-instance distributions are in Appendix~\ref{app:instance_distributions}.

\vspace{-0.2em}
Figure~\ref{fig:main_distributions} evaluates all methods when $\boldsymbol{\xi}$
is not Gaussian, on KP at $n{=}100$ and $\alpha{=}0.99$.
The three methods separate the same way in every panel.
\method is $\alpha$-feasible on nearly all instances with mean gap of at most 0.32\%.
CVaR is $\alpha$-feasible on all 80 instances, but pays in solution quality.
SAA stays below $\alpha$ on 65\% of instances even at $|\mathcal{S}|{=}10^4$, and its negative
gaps are solutions that beat the best $\alpha$-feasible objective while violating the chance
constraint.  See Appendix~\ref{app:nongaussian} for details.

\begin{table}[t]
  \caption{
  Feasibility, objective, and runtime results. Each cell
reports FO-gap / $\alpha$-feasibility rate, followed by runtime in seconds
(averaged over $\alpha$ values).
FO-gap is relative to BKO (Best Known Objective): the best $\alpha$-feasible objective across all methods per instance.
SAA and CVaR use $|\mathcal{S}|{=}1000$ scenarios; \methoddelta applies Hoeffding correction with $\delta{=}0.05$.
Sizes: KP: $n \in \{20,50,100\}$, $d \in \{3,5,10\}$; MWM:
$10\times10/15\times15/20\times20$, $d=5$; SRCSP:
$|A| \in \{20,50,100\}$, $d \in \{3,5,10\}$.
Each entry is averaged over 20 test instances.
}
  \label{tab:results}
  \centering
  \resizebox{\textwidth}{!}{%
  \setlength\tabcolsep{3pt}
  \begin{tabular}{ll rrrr rrrr rrrr}
    \toprule
    & & \multicolumn{4}{c}{Small}
      & \multicolumn{4}{c}{Medium}
      & \multicolumn{4}{c}{Large} \\
    \cmidrule(lr){3-6}\cmidrule(lr){7-10}\cmidrule(lr){11-14}
    & Method
      & Time & $\alpha{=}.90$ & $\alpha{=}.95$ & $\alpha{=}.99$
      & Time & $\alpha{=}.90$ & $\alpha{=}.95$ & $\alpha{=}.99$
      & Time & $\alpha{=}.90$ & $\alpha{=}.95$ & $\alpha{=}.99$ \\
    \midrule
        \multirow{5}{*}{\textbf{KP}}
      & SAA & 574 & 0.08\%\,/\,75\% & 0.11\%\,/\,50\% & \textbf{0.00\%}\,/\,45\% & 3588 & 0.21\%\,/\,40\% & 0.19\%\,/\,25\% & ---\,/\,0\% & 3600 & 0.11\%\,/\,25\% & 0.29\%\,/\,5\% & ---\,/\,0\% \\
      & CVaR & 2.5 & 5.82\%\,/\,100\% & 4.88\%\,/\,100\% & 3.33\%\,/\,100\% & 7.3 & 4.16\%\,/\,100\% & 3.40\%\,/\,100\% & 1.37\%\,/\,85\% & 156 & 3.25\%\,/\,100\% & 2.45\%\,/\,100\% & 0.89\%\,/\,35\% \\
      & GB & 0.1 & 3.52\%\,/\,100\% & 2.94\%\,/\,100\% & 2.54\%\,/\,100\% & 1.5 & 3.00\%\,/\,100\% & 2.65\%\,/\,100\% & 2.14\%\,/\,100\% & 111 & 2.87\%\,/\,100\% & 2.51\%\,/\,100\% & 2.05\%\,/\,100\% \\
      \arrayrulecolor{black!30}\cmidrule{2-14}\arrayrulecolor{black}
      & \method & 88 & \textbf{0.00\%}\,/\,100\% & \textbf{0.01\%}\,/\,100\% & 0.00\%\,/\,100\% & 111 & \textbf{0.00\%}\,/\,95\% & \textbf{0.04\%}\,/\,100\% & \textbf{0.00\%}\,/\,100\% & 129 & \textbf{0.01\%}\,/\,100\% & \textbf{0.01\%}\,/\,90\% & \textbf{0.00\%}\,/\,95\% \\
      & \methoddelta & 88 & 0.07\%\,/\,100\% & 0.10\%\,/\,100\% & 0.50\%\,/\,100\% & 111 & 0.06\%\,/\,100\% & 0.11\%\,/\,100\% & 0.39\%\,/\,100\% & 129 & 0.03\%\,/\,100\% & 0.05\%\,/\,100\% & 0.27\%\,/\,100\% \\
    \midrule
    \multirow{5}{*}{\textbf{MWM}}
      & SAA & 1943 & 0.09\%\,/\,75\% & 0.17\%\,/\,75\% & 0.13\%\,/\,55\% & 3600 & 0.80\%\,/\,85\% & 0.62\%\,/\,55\% & 0.11\%\,/\,55\% & 3600 & 1.38\%\,/\,65\% & 0.84\%\,/\,80\% & 0.04\%\,/\,40\% \\
      & CVaR & 14 & 9.20\%\,/\,100\% & 6.76\%\,/\,100\% & 5.15\%\,/\,100\% & 286 & 5.75\%\,/\,100\% & 5.18\%\,/\,100\% & 3.39\%\,/\,95\% & 2372 & 4.49\%\,/\,100\% & 3.85\%\,/\,100\% & 1.94\%\,/\,100\% \\
      & GB & 1.9 & 5.71\%\,/\,100\% & 4.58\%\,/\,100\% & 4.07\%\,/\,100\% & 39 & 3.62\%\,/\,100\% & 3.04\%\,/\,100\% & 2.63\%\,/\,100\% & 761 & 2.68\%\,/\,100\% & 2.22\%\,/\,100\% & 1.63\%\,/\,100\% \\
      \arrayrulecolor{black!30}\cmidrule{2-14}\arrayrulecolor{black}
      & \method & 136 & \textbf{0.00\%}\,/\,100\% & \textbf{0.00\%}\,/\,100\% & \textbf{0.00\%}\,/\,100\% & 291 & \textbf{0.00\%}\,/\,100\% & \textbf{0.00\%}\,/\,95\% & \textbf{0.00\%}\,/\,100\% & 565 & \textbf{0.00\%}\,/\,100\% & \textbf{0.00\%}\,/\,100\% & \textbf{0.00\%}\,/\,95\% \\
      & \methoddelta & 136 & 0.00\%\,/\,100\% & 0.03\%\,/\,100\% & 0.11\%\,/\,100\% & 291 & 0.09\%\,/\,100\% & 0.06\%\,/\,100\% & 0.56\%\,/\,100\% & 565 & 0.11\%\,/\,100\% & 0.06\%\,/\,100\% & 0.45\%\,/\,100\% \\
    \midrule
    \multirow{5}{*}{\textbf{SRCSP}}
      & SAA & 1.1 & 1.84\%\,/\,95\% & \textbf{0.00\%}\,/\,100\% & \textbf{0.00\%}\,/\,95\% & 21 & \textbf{0.00\%}\,/\,90\% & 0.98\%\,/\,95\% & 0.79\%\,/\,70\% & 551 & 0.07\%\,/\,85\% & 0.28\%\,/\,75\% & 0.41\%\,/\,55\% \\
      & CVaR & 0.5 & 10.38\%\,/\,100\% & 7.16\%\,/\,100\% & 3.10\%\,/\,100\% & 2.9 & 8.56\%\,/\,100\% & 6.84\%\,/\,100\% & 6.95\%\,/\,100\% & 11 & 8.62\%\,/\,100\% & 7.44\%\,/\,100\% & 4.62\%\,/\,100\% \\
      & GB & 0.0 & 7.51\%\,/\,100\% & 3.29\%\,/\,100\% & 2.05\%\,/\,100\% & 0.1 & 5.77\%\,/\,100\% & 4.98\%\,/\,100\% & 4.30\%\,/\,100\% & 0.5 & 6.77\%\,/\,100\% & 6.25\%\,/\,100\% & 5.40\%\,/\,100\% \\
      \arrayrulecolor{black!30}\cmidrule{2-14}\arrayrulecolor{black}
      & \method & 81 & \textbf{0.00\%}\,/\,100\% & 0.00\%\,/\,100\% & 0.00\%\,/\,100\% & 91 & 0.03\%\,/\,100\% & \textbf{0.00\%}\,/\,100\% & \textbf{0.00\%}\,/\,100\% & 109 & \textbf{0.04\%}\,/\,95\% & \textbf{0.00\%}\,/\,100\% & \textbf{0.00\%}\,/\,100\% \\
      & \methoddelta & 81 & 0.00\%\,/\,100\% & 0.00\%\,/\,100\% & 0.00\%\,/\,100\% & 91 & 0.03\%\,/\,100\% & 0.24\%\,/\,100\% & 0.59\%\,/\,100\% & 109 & 0.10\%\,/\,100\% & 0.13\%\,/\,100\% & 0.64\%\,/\,100\% \\
    \bottomrule
  \end{tabular}}
\end{table}

\begin{figure}[t]
  \centering
  \includegraphics[width=\textwidth]{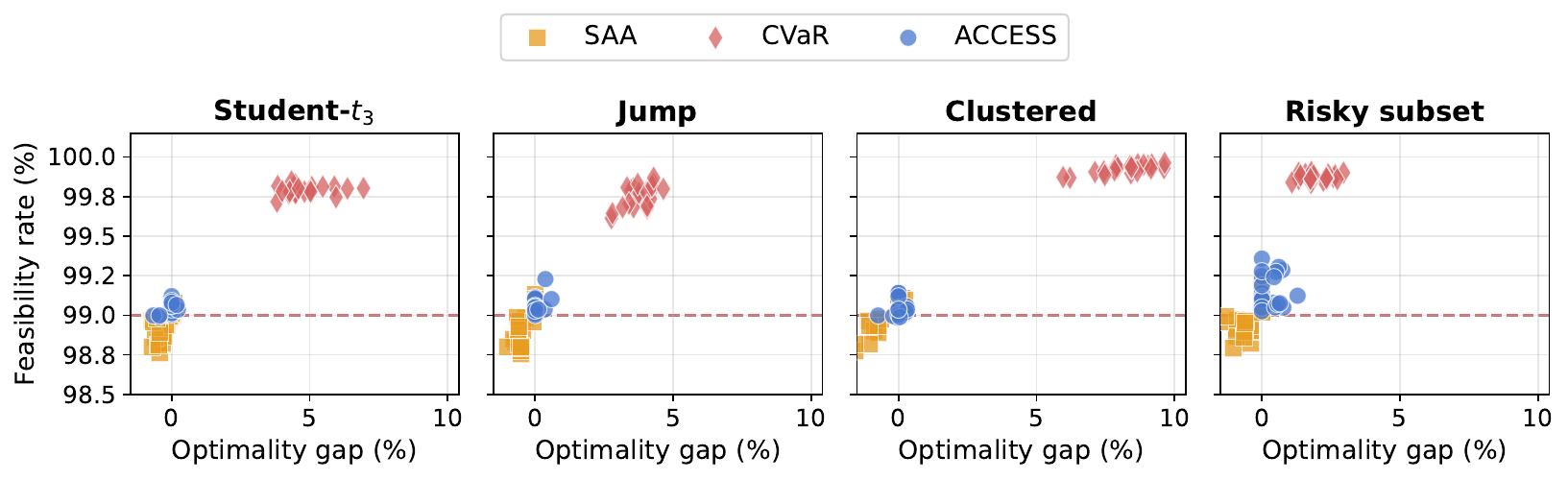}
  \caption{Feasibility rate vs.\ optimality gap under non-Gaussian distributions, on KP with
  $n{=}100, \alpha{=}0.99$. Each point is one of 20 instances, and the dashed line marks $\alpha$.
  SAA and CVaR use $|\mathcal{S}|{=}10^4$. 
  Gap is relative to the best $\alpha$-feasible objective, so infeasible solutions may have negative gaps. \vspace{-1em}
  }
  \label{fig:main_distributions}
\end{figure}

\textbf{Limitations.}
{
\method replaces one large solve by thousands of small independent ones, so its cost presumes a cheap deterministic subproblem and benefits heavily from parallel hardware.  
Its guarantee concerns feasibility rather than objective value, although empirically the returned solutions lie within a fraction of a percent of the best objective found by any method.}

\section{Conclusion}\label{sec:conclusion}

We introduced \method, a structured sampling approach for the computationally efficient and effective solution of chance-constrained stochastic programs by reducing the search from high-dimensional solution space to a 2D $(\gamma, \beta)$ parameter space while capturing the trade-off between optimality and feasibility. By concentrating candidates near the feasibility boundary via Bayesian optimization, \method consistently finds high-quality $\alpha$-feasible solutions while \methoddelta provides certified guarantees with probability $1-\delta$.
Empirically, \method attains near-zero optimality gap at high feasibility across all benchmarks,
where SAA is frequently infeasible and the safe approximations are uniformly conservative.
Future directions include extending to alternative probabilistic constraint structures and risk measures such as distributionally robust formulations,
generalizing to richer formulations that allow for more flexibility, and investigating alternative search strategies beyond Bayesian optimization.

\bibliography{refs}
\bibliographystyle{iclr2027_conference}

\clearpage
\appendix

\noindent\textbf{Appendix}
\begin{itemize}
  \item[\ref{app:benchmarks}] Benchmarks and Details \dotfill \pageref{app:benchmarks}
  \item[\ref{app:baselines}] Baselines \dotfill \pageref{app:baselines}
  \item[\ref{app:certified_guarantees}] Certified Feasibility Guarantees \dotfill \pageref{app:certified_guarantees}
  \item[\ref{app:boccp_analysis}] Algorithmic Details \dotfill \pageref{app:boccp_analysis}
  \item[\ref{app:nonstandard}] Non-Standard Uncertainty \dotfill \pageref{app:nonstandard}
  \item[\ref{app:boccp_gamma_only}] Ablation: Nominal-Only Tightening \dotfill \pageref{app:boccp_gamma_only}
  \item[\ref{app:baseline_analysis}] Baseline Analysis \dotfill \pageref{app:baseline_analysis}
  \item[\ref{app:instance_distributions}] Per-Instance Distributional Results \dotfill \pageref{app:instance_distributions}
  \item[\ref{app:detailed_runtime}] Detailed Runtime Results \dotfill \pageref{app:detailed_runtime}
\end{itemize}

\clearpage
\section{Benchmarks and Details}
\label{app:benchmarks}

Three benchmarks share the same uncertainty model.
Every random coefficient is perturbed multiplicatively, $a(\boldsymbol{\xi}) = \bar{{a}} \odot \max(1 + \sigma\xi, 0)$ with $\xi \sim \mathcal{N}(0,1)$, where the truncation keeps the coefficient non-negative.
Perturbations of this form are common in the CCSP literature~\citep{luedtke2008sample,ahmed2008solving,nemirovski2006convex}.
We use $\sigma = 0.2$ on every constraint unless stated otherwise, and we evaluate methods on each benchmark at three sizes and three confidence levels $\alpha \in \{0.90, 0.95, 0.99\}$ over 20 held-out instances per setting, which gives 540 instances under Gaussian $\boldsymbol{\xi}$.
Two further settings reuse the knapsack generator and change only the uncertainty, (1) the heterogeneous $\sigma_d$ of Appendix~\ref{app:het_ablation} and (2) the four non-Gaussian distributions of Appendix~\ref{app:nongaussian}.

\subsection{Binary Chance-Constrained Knapsack}
The multi-dimensional binary chance-constrained knapsack with $n$ items and $d$ resource dimensions is given as
\begin{subequations}
 \begin{align}
  \max_{x \in \{0,1\}^n} \quad & \sum_{j=1}^n v_j x_j \\
  \text{s.t.} \quad &
  \text{Pr}_{\mathbf{\xi}}\!\left(\sum_{j=1}^n \tilde{w}_{i,j}(\boldsymbol{\xi})\, x_j \le b_i,\;
  \forall\, i=1,\dots,d\right) \ge \alpha.
\end{align}  \label{eq:kp}
\end{subequations}
where $x_j$ selects item $j$, $v_j > 0$ is its value, $w_{i,j} > 0$ its nominal weight in dimension $i$, and $b_i$ the deterministic capacity of dimension $i$.
A scenario carries one $\xi_j$ per item, so the perturbed weight is $\tilde{w}_{i,j}(\boldsymbol{\xi}) = w_{i,j} \max(1 + \sigma \xi_j,\, 0)$ with $\boldsymbol{\xi} \sim \mathcal{N}(0, I_n)$.

Weights are drawn as $w_{i,j} \sim \mathrm{Uniform}\{1,\dots,100\}$ and capacities as $b_i = \rho \sum_j w_{i,j}$ with $\rho = 0.5$.
Values follow the weakly correlated type of \citet{pisinger2005hard}, $v_j = \max(w_{1,j} + \epsilon,\, 1)$ with $\epsilon \sim \mathrm{Uniform}\{-10,\dots,10\}$.
The three sizes are $n{=}20$, $d{=}3$; $n{=}50$, $d{=}5$; and $n{=}100$, $d{=}10$.

\subsection{Maximum Weight Bipartite Matching}
\label{app:mwm}

The chance-constrained maximum weight bipartite matching with $d$ resource budgets is given as
\begin{subequations}
\begin{align}
  \max_{x\in\{0,1\}^{mn}} \quad & \sum_{i,j} p_{ij}\,x_{ij} \\
  \text{s.t.} \quad
  & \text{Pr}_{\mathbf{\xi}}\!\left(\sum_{i,j} \tilde{r}_{k,ij}(\boldsymbol{\xi})\,x_{ij} \le B_k,\;\forall\,k=1,\ldots,d\right) \ge \alpha, \\
  & \sum_{j} x_{ij} \le 1 \quad \forall\,i\in U, \qquad \sum_{i} x_{ij} \le 1 \quad \forall\,j\in V,
\end{align}
\end{subequations}
where $x_{ij}$ selects edge $(i,j)$ of a complete bipartite graph $G=(U,V,E)$ with $|U|=m$ and $|V|=n$, $p_{ij}$ is the edge profit, $r_{k,ij}$ its nominal consumption of resource $k$, and $B_k$ the budget of resource $k$.
A scenario carries one $\xi_{ij}$ per edge, so $\tilde{r}_{k,ij}(\boldsymbol{\xi})=r_{k,ij}\max(1+\sigma\xi_{ij},0)$ with $\xi_{ij}\sim\mathcal{N}(0,1)$.

Resources are drawn as $r_{k,ij}\sim\mathrm{Uniform}\{1,\dots,10\}$ and profits as $p_{ij}=\max(\bar{r}_{ij}+\epsilon,\,1)$ with $\epsilon\sim\mathrm{Uniform}\{-2,\dots,2\}$, where $\bar{r}_{ij}=d^{-1}\sum_k r_{k,ij}$.
Budgets are $B_k=\rho\min(m,n)\bar{r}_k$ with $\rho=0.5$, where $\bar{r}_k$ is the mean nominal consumption of resource $k$ and $\min(m,n)$ is the largest number of edges in a matching.
The three sizes are $m{=}n{=}10$, $15$ and $20$ at $d{=}5$, giving $100$, $225$ and $400$ variables.

\subsection{Stochastic Resource-Constrained Shortest Path}
\label{app:srcsp}

The resource-constrained shortest path~\citep{irnich2005shortest} with stochastic arc consumptions is given as
\begin{subequations}
 \begin{align}
  \max_{x\in\{0,1\}^{|A|}} \quad & \sum_{e\in A} c_e\,x_e \\
  \text{s.t.} \quad
  & \text{Pr}_{\mathbf{\xi}}\!\left(\sum_{e\in A} \tilde{r}_{i,e}(\boldsymbol{\xi})\,x_e \le B_i,\;\forall\,i=1,\dots,d\right) \ge \alpha, \label{eq:srcsp_cc}\\
  & \sum_{e:\,\mathrm{tail}(e)=v} x_e - \sum_{e:\,\mathrm{head}(e)=v} x_e = b_v \quad \forall\,v\in V, \label{eq:srcsp_flow}
\end{align}
\end{subequations}
where $x_e$ selects arc $e$ of a directed acyclic graph $G=(V,A)$, $c_e>0$ is the arc profit, $r_{i,e}>0$ its nominal consumption of resource $i$, and $B_i$ the budget of resource $i$.
The flow-conservation right-hand side is $b_s=1$, $b_t=-1$, and $b_v=0$ at every internal node.
A scenario carries one $\xi_e$ per arc, so $\tilde{r}_{i,e}(\boldsymbol{\xi}) = r_{i,e}\max(1+\sigma\xi_e,0)$ with $\xi_e \overset{\mathrm{iid}}{\sim} \mathcal{N}(0,1)$.

Each graph is a sequential backbone $0\!\to\!1\!\to\!\cdots\!\to\!(|V|-1)$, which guarantees an $s$-$t$ path, plus random skip arcs $(i,j)$ with $j \ge i+2$ up to exactly $|A|$ arcs.
Arc resources are drawn as $r_{i,e}\sim\mathrm{Uniform}\{1,\dots,100\}$ and profits as $c_e=\max(r_{1,e}+\epsilon,\,1)$ with $\epsilon\sim\mathrm{Uniform}\{-10,\dots,10\}$.
Budgets are $B_i = \rho\,(|V|-1)\,\bar{r}_i$ with $\rho=0.65$, where $\bar{r}_i$ is the mean nominal resource in dimension $i$ and $|V|-1$ is the largest number of arcs on a path.
The three sizes are $|V|{=}10$, $|A|{=}20$, $d{=}3$; $|V|{=}14$, $|A|{=}50$, $d{=}5$; and $|V|{=}18$, $|A|{=}100$, $d{=}10$.

\clearpage
\section{Baselines}
\label{app:baselines}

Each baseline solves one deterministic surrogate of~\eqref{eq:ccsp} and returns a single solution.
SAA and CVaR embed $|\mathcal{S}|$ i.i.d.\ scenarios $\mathcal{S} = \{\boldsymbol{\xi}^{(s)}\}_{s=1}^{|\mathcal{S}|}$ drawn from $\mathbb{P}$ through a set independent of the test set $\mathcal{T}$, and the Gaussian--Bonferroni (GB) approximation uses the model $\sigma$ without requiring scenarios.
All three are solved with Gurobi at 6 threads and a 3600\,s limit.
Each solution is then evaluated on $\mathcal{T}$ exactly as the candidate \method selects is, and counts as $\alpha$-feasible only if its empirical feasibility on $\mathcal{T}$ is at least $\alpha$.

\subsection{SAA Formulation}
\label{app:saa_formulation}

The sample average approximation (SAA) replaces the chance constraint of~\eqref{eq:ccsp} by its empirical counterpart over $\mathcal{S}$ and linearises it with one indicator per scenario~\citep{luedtke2008sample}:
\begin{equation}
\small
 \max_{\mathbf{x} \in \mathcal{X}} \Big\{f(\mathbf{x}) : \mathbf{A}(\boldsymbol{\xi}^{(s)})\mathbf{x} \le \mathbf{b}(\boldsymbol{\xi}^{(s)}) + \mathbf{M} (1 - y_s) \;\; \forall s = 1,\ldots, |\mathcal{S}|, \;\; \sum_{s = 1}^{|\mathcal{S}|} y_s \ge \lceil \alpha|\mathcal{S}| \rceil, \;\; \mathbf{y} \in \{0,1\}^{|\mathcal{S}|} \Big\},
\label{eq:saa}
\end{equation}
where $y_s = 1$ marks a scenario whose constraints are enforced and $\mathbf{M} \in \mathbb{R}^d_{+}$ is a vector of big-M constants, one per row.
The cardinality constraint allows at most $\lfloor (1-\alpha)|\mathcal{S}| \rfloor$ sampled scenarios to be violated, so~\eqref{eq:saa} is the chance constraint at risk level $1-\alpha$ under the empirical distribution of $\mathcal{S}$.
We use the plain big-M model without the strengthening inequalities of \citet{luedtke2010integer}. 
{We set $\mathbf{M}$ from the sampled scenarios with
$\hat a_{ij} = \max_{s} a_{ij}(\boldsymbol{\xi}^{(s)})$ the largest sampled coefficient and
$\bar b_i$ the right-hand side, which is deterministic in all three families,
\begin{equation*}
M_i =
\begin{cases}
\sum_{j=1}^{n} \hat a_{ij} - \bar b_i
  & \text{knapsack, where any subset of items may be selected,} \\[3pt]
\sum_{j \in T_i} \hat a_{ij} - \bar b_i
  & \text{matching, with $T_i$ the $\min(|U|,|V|)$ largest $\hat a_{ij}$,} \\[3pt]
(|V| - 1)\,\max_{j} \hat a_{ij} - \bar b_i
  & \text{shortest path, where a path uses at most $|V|-1$ arcs,}
\end{cases}
\end{equation*}
each floored at $10^{-3}$, so $M_i$ dominates the largest violation any $\mathbf{x} \in \mathcal{X}$
can have on $\mathcal{S}$.}

\subsection{Safe Approximations}
\label{app:safe_approx}

Safe approximations replace the chance constraint~\eqref{eq:cc} by a deterministic convex constraint that implies it, so every feasible point of the surrogate is $\alpha$-feasible by construction~\citep{nemirovski2006convex,chen2007robust,nemirovski2012safe}.
For a single row whose usage is a linear combination of independent zero-mean perturbations, the classical members of this family share the mean-plus-margin form of~\eqref{eq:gb_app} and differ only in the constant $\kappa$ and in what they assume about $\boldsymbol{\xi}$.
Each bounds an individual constraint, so a joint constraint over $d$ rows is handled either by splitting the risk across rows or by a joint distributionally robust reformulation~\citep{chen2010cvar,zymler2013distributionally}, whereas SAA enforces the joint constraint on the sampled scenarios directly.
We run the two members that apply to our benchmarks, the Gaussian reformulation with a Bonferroni split and the sample CVaR.

\paragraph{Bonferroni approximation.}
For a joint chance constraint over $d$ rows, the Bonferroni inequality requires each row $i$ to hold with probability at least $\alpha_i$ with $\sum_{i=1}^d (1 - \alpha_i) \leq 1 - \alpha$, and we use the uniform allocation $\alpha_i = 1 - (1-\alpha)/d$.
The bound ignores dependence between rows.
In our benchmarks every row is driven by the same item-level noise $\boldsymbol{\xi}$, so the rows are strongly positively dependent and the bound is loose.

\paragraph{Gaussian--Bonferroni (GB) approximation.}
The GB baseline imposes the joint constraint~\eqref{eq:cc} one row at a time.
Here we take a single row at level $\alpha$,
$\Pr(\mathbf{a}_i(\boldsymbol{\xi})^\top \mathbf{x} \leq \bar b_i) \geq \alpha$.
For the multiplicative Gaussian model, which a subset of our benchmarks uses,
$\mathbf{a}_i(\boldsymbol{\xi}) = \bar{\mathbf{a}}_i \odot (1 + \sigma_i \boldsymbol{\xi})$ with
$\boldsymbol{\xi} \sim \mathcal{N}(0, \mathbf{I})$, we have
$\mathbf{a}_i(\boldsymbol{\xi})^\top \mathbf{x} \sim \mathcal{N}\big(\bar{\mathbf{a}}_i^\top \mathbf{x},\,
\sigma_i^2 \|\bar{\mathbf{a}}_i \odot \mathbf{x}\|_2^2\big)$, so for $\alpha \geq 1/2$ this is
equivalent to the second-order cone constraint
\begin{equation}
\bar{\mathbf{a}}_i^\top \mathbf{x} + \Phi^{-1}(\alpha)\, \sigma_i \|\bar{\mathbf{a}}_i \odot \mathbf{x}\|_2 \leq \bar b_i,
\label{eq:gb_app}
\end{equation}
where $\Phi^{-1}(\cdot)$ is the inverse standard normal CDF~\citep[Section~2.6]{lobo1998applications}.
This is the exact form of the individual constraint, so GB is the tightest member of the family under the Gaussian model.
Imposing~\eqref{eq:gb_app} on every row with $\alpha$ replaced by the Bonferroni level
$\alpha_i = 1 - (1-\alpha)/d$ then guarantees~\eqref{eq:cc}.
When $\mathbf{x}$ is binary, the result is a mixed-integer second-order cone program.
The benchmarks clip the coefficients at zero, so~\eqref{eq:gb_app} is exact for the untruncated model only.  However, we note that with $\sigma_i = 0.2$, truncation among sampled scenarios is exceedingly rare.  

\paragraph{CVaR approximation.}
Conditional value-at-risk gives the tightest convex conservative approximation of an individual chance constraint~\citep{nemirovski2006convex,rockafellar2000optimization}: for the violation $g_i(\mathbf{x}, \boldsymbol{\xi}) = \mathbf{A}(\boldsymbol{\xi})_i \mathbf{x} - b_i(\boldsymbol{\xi})$ of row $i$ and a budget $\varepsilon_i \in (0,1)$,
\begin{equation}
\mathrm{CVaR}_{\varepsilon_i}\big(g_i(\mathbf{x}, \boldsymbol{\xi})\big) = \min_{t \in \mathbb{R}} \Big\{ t + \frac{1}{\varepsilon_i}\, \mathbb{E}\big[(g_i(\mathbf{x}, \boldsymbol{\xi}) - t)^+\big] \Big\} \leq 0
\label{eq:cvar_app}
\end{equation}
implies $\Pr(g_i(\mathbf{x}, \boldsymbol{\xi}) \leq 0) \geq 1 - \varepsilon_i$, the CVaR being the mean of the worst $\varepsilon_i$ fraction of outcomes.
We impose~\eqref{eq:cvar_app} row by row with the Bonferroni budgets $\varepsilon_i = (1-\alpha)/d$ and replace the expectation by its average over $\mathcal{S}$.
With the auxiliary variables of \citet{rockafellar2000optimization} the constraints are linear,
\begin{equation}
t_i + \frac{1}{\varepsilon_i |\mathcal{S}|} \sum_{s=1}^{|\mathcal{S}|} w_{s,i} \leq 0, \qquad
w_{s,i} \geq \mathbf{A}(\boldsymbol{\xi}^{(s)})_i \mathbf{x} - b_i(\boldsymbol{\xi}^{(s)}) - t_i, \qquad w_{s,i} \geq 0, \qquad i = 1, \ldots, d,
\label{eq:cvar_saa}
\end{equation}
adding $d$ thresholds and $d\,|\mathcal{S}|$ continuous variables to the MILP over $\mathbf{x}$, and no binaries.
The sample CVaR is conservative only once $\varepsilon_i|\mathcal{S}|$ is large, since when $\varepsilon_i|\mathcal{S}| \leq 1$ it is the largest sampled violation of the row and carries no guarantee.
CVaR needs no distributional assumption, so it is the safe approximation we run on the non-Gaussian distributions.

\clearpage
\section{Certified Feasibility Guarantees}
\label{app:certified_guarantees}

\methoddelta selects a candidate whose true feasibility admits a high-probability lower bound.  This section states the bound and the assumptions it needs.

\subsection{Setup}
\label{app:cert_setup}

Let $N$ candidate solutions $\hat{\mathbf{x}}_1,\dots,\hat{\mathbf{x}}_N$ be generated by \method \emph{without} using the evaluation set $\mathcal{V}$. The candidates are then compared on a single evaluation set $\mathcal{V} = \{\xi^{(1)},\dots,\xi^{(|\mathcal{V}|)}\}$ of $|\mathcal{V}|$ i.i.d.\ draws from the true (unknown) distribution $\mathbb{P}$, and the best feasible-looking one is selected. In our experiments, $N=1000$ candidates and $|\mathcal{V}|=10^6$ scenarios.
Selecting on $\mathcal{V}$ biases the winner, since the candidate that looks best is the one most likely to have drawn favourably, so the empirical rate needs a multiple-comparisons correction over the $N$ candidates.

For a candidate $\hat{\mathbf{x}}_j$, let
\[
p_j := \Pr_\xi\big(\mathbf{A}(\xi)\hat{\mathbf{x}}_j \le \mathbf{b}(\xi)\big)
\]
be its true (unknown) feasibility probability, and
\[
\hat{p}_j := \frac{1}{|\mathcal{V}|}\sum_{i=1}^{|\mathcal{V}|} \mathbb{I}\big\{\mathbf{A}(\xi^{(i)})\hat{\mathbf{x}}_j \le \mathbf{b}(\xi^{(i)})\big\}
\]
its empirical rate on $\mathcal{V}$.

\subsection{Proof of Proposition~\ref{prop:feasibility}}
\label{app:hoeffding}

We restate the proposition of Section~\ref{sec:guarantees} for reference.

\begin{propositionrestate}
Fix $\delta \in (0,1)$ and let $\hat{\mathbf{x}}_1, \ldots, \hat{\mathbf{x}}_N$ be generated
independently of $\mathcal{V}$. For each $j$, let $p_j = \Pr_{\boldsymbol{\xi}}(\mathbf{A}(\boldsymbol{\xi})\hat{\mathbf{x}}_j \leq \mathbf{b}(\boldsymbol{\xi}))$
be the true feasibility probability of $\hat{\mathbf{x}}_j$ and $\hat{p}_j$ its empirical rate on
$\mathcal{V}$. With probability at least $1 - \delta$ over the draw of $\mathcal{V}$,
\[
p_j \geq \hat{p}_j - \sqrt{\frac{\ln(N/\delta)}{2|\mathcal{V}|}} \quad \text{simultaneously for every } j = 1, \ldots, N.
\]
\end{propositionrestate}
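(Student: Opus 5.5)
The plan is to condition on the candidates, apply Hoeffding's inequality to each one separately, and finish with a union bound over the $N$ candidates. Because $\hat{\mathbf{x}}_1,\dots,\hat{\mathbf{x}}_N$ are generated independently of $\mathcal{V}$, we may work conditionally on their realized values, treating them as fixed vectors. The only randomness left is then the draw of $\mathcal{V}$. Once the conditional statement holds with probability at least $1-\delta$ for every realization of the candidates, the unconditional statement follows by taking expectation over the candidates (tower property).

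Fix a realization and an index $j$, and define $Z_{i}^{(j)} := \mathbb{I}\{\mathbf{A}(\xi^{(i)})\hat{\mathbf{x}}_j \le \mathbf{b}(\xi^{(i)})\}$ for $i=1,\dots,|\mathcal{V}|$. Since the scenarios in $\mathcal{V}$ are i.i.d.\ from $\mathbb{P}$ and independent of $\hat{\mathbf{x}}_j$, the $Z_i^{(j)}$ are i.i.d.\ Bernoulli variables with mean $p_j$, taking values in $[0,1]$. Their average is $\hat p_j$. The one-sided Hoeffding inequality~\citep{hoeffding1963} then gives, for any $\epsilon>0$,
\[
\Pr\big(\hat p_j - p_j \ge \epsilon\big) \le \exp\!\big(-2|\mathcal{V}|\epsilon^2\big).
\]
Only the upper deviation of $\hat p_j$ matters, because the claim is a lower bound on $p_j$; this is why the bound carries $\ln(N/\delta)$ rather than $\ln(2N/\delta)$.

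Next, apply the union bound over the $N$ events $E_j := \{\hat p_j - p_j \ge \epsilon\}$, which gives $\Pr(\bigcup_j E_j) \le N\exp(-2|\mathcal{V}|\epsilon^2)$. Setting the right-hand side equal to $\delta$ and solving yields $\epsilon = \epsilon_\delta = \sqrt{\ln(N/\delta)/(2|\mathcal{V}|)}$. On the complement event, which has probability at least $1-\delta$, we have $p_j > \hat p_j - \epsilon_\delta$ for every $j$ simultaneously. This is \eqref{eq:hoeffding_bound}.

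The main subtlety is justifying the conditioning step. The Bernoulli variables are i.i.d.\ with mean $p_j$ only because the candidates do not depend on $\mathcal{V}$. Had $\mathcal{V}$ been used to generate candidates, $p_j$ itself would be random in a way correlated with the samples, and Hoeffding's inequality would not apply. In \method this independence holds by construction, since the BO search uses only $\mathcal{V}'$ and the scenarios $\boldsymbol{\xi}^{(\ell)}$, both drawn independently of $\mathcal{V}$.

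A second point follows from the simultaneity of the bound: it holds for every $j$, so it also holds for any index chosen by a data-dependent rule on $\mathcal{V}$, such as the selection step of \methoddelta. This is what yields the certificate in \eqref{eq:certified_threshold}. Finally, the conditional probability bound does not depend on the realization, so integrating over the candidates' distribution preserves the $1-\delta$ guarantee.
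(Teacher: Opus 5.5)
Your proof is correct and follows the paper's argument essentially step for step: condition on the realized candidate pool so the indicators on $\mathcal{V}$ are i.i.d.\ Bernoulli$(p_j)$, apply the one-sided Hoeffding bound with $t=\sqrt{\ln(N/\delta)/(2|\mathcal{V}|)}$ to get failure probability at most $\delta/N$ per candidate, and take a union bound over the $N$ candidates. Your explicit tower-property step to remove the conditioning, and your remark that simultaneity covers any index selected using $\mathcal{V}$, match what the paper says and state it slightly more carefully.
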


\begin{proof}
The candidate solutions $\hat{\mathbf{x}}_1,\dots,\hat{\mathbf{x}}_N$ are generated without using $\mathcal{V}$. Thus, for any realized candidate pool and each candidate $j$, the $|\mathcal{V}|$ feasibility indicators evaluated on $\mathcal{V}$ are i.i.d.\ Bernoulli($p_j$).

The proposition's bound $p_j \geq \hat{p}_j - t$ fails exactly when $\hat{p}_j - p_j > t$, i.e., when the empirical rate overshoots the true rate by more than $t$. By Hoeffding's inequality~\citep{hoeffding1963}, for any $t > 0$:
\[
\Pr\big(\hat{p}_j - p_j \geq t\big) \leq e^{-2|\mathcal{V}|t^2}.
\]
Taking $t = \sqrt{\ln(N/\delta)/(2|\mathcal{V}|)}$ makes the right-hand side equal $\delta/N$, so this failure event occurs with probability at most $\delta/N$.

By Boole's inequality (union bound), the probability that the bound fails for \emph{any} candidate is:
\[
\Pr\big(\exists j: \text{bound fails for } j\big) \leq \sum_{j=1}^N \Pr\big(\text{bound fails for } j\big) \leq \sum_{j=1}^N \frac{\delta}{N} = \delta.
\]
So the bound holds for all $N$ candidates simultaneously with probability at least $1-\delta$.  The event does not reference which $j$ is selected, so it covers whichever index the selection rule returns.
\end{proof}

\textbf{Certified selection rule.} To certify that the selected solution has true feasibility $p_j \geq \alpha$ with probability at least $1-\delta$, we select only candidates satisfying:
\[
\hat{p}_j \geq \alpha + \epsilon, \quad \text{where } \epsilon = \sqrt{\frac{\ln(N/\delta)}{2|\mathcal{V}|}}.
\]

\subsection{Relation to SAA Theory}

Proposition~\ref{prop:feasibility} is a finite-candidate specialization of the feasibility argument in \citet{luedtke2008sample}.
Their union bound runs over every decision in the sample-approximation feasible region that is truly infeasible but might look empirically feasible, which gives a multiplicity factor based on the size of the feasible region, which in general is exponential for a bounded integer program. They note that the resulting sample sizes are far too large to solve.
\method generates its candidates without $\mathcal{V}$, so only those $N$ can be selected and the correction runs over that finite set instead.
It needs i.i.d.\ draws for $\mathcal{V}$ and a pool generated without it, and no distributional assumptions.

\subsection{Practical Implications}

For our experiments with $|\mathcal{V}| = 10^6$, $N = 1000$, and $\delta = 0.05$, the required margin is:
\[
\epsilon = \sqrt{\frac{\ln(1000/0.05)}{2 \times 10^6}} = \sqrt{\frac{\ln(20000)}{2 \times 10^6}} \approx 0.0022 \quad (0.22\%).
\]

Raising the threshold by $0.22\%$ has a marginal impact on the objective so the certificate is close to free.
The margin scales as $\epsilon \propto 1/\sqrt{|\mathcal{V}|}$, and feasibility evaluation is $2$--$13\%$ of \method runtime (Appendix~\ref{app:detailed_runtime}), so a tenfold larger $\mathcal{V}$ tightens the bound by $\sqrt{10} \approx 3.2$ at a cost that stays small.
Table~\ref{tab:hoeffding_comparison} compares the two bounds at the same empirical rate.
Even granting SAA a multiplicity factor of one, which ignores the selection its own optimization performs, its bound sits $3.9\%$ below the empirical rate against $0.22\%$ for ours.

\begin{table}[h]
\centering
\caption{Hoeffding lower bounds on true feasibility for an observed empirical rate of $\hat{p}=0.99$, with $\delta=0.05$.}
\label{tab:hoeffding_comparison}
\begin{tabular}{lccc}
\toprule
Method & $|\mathcal{V}|$ & Empirical $\hat{p}$ & Hoeffding lower bound \\
\midrule
SAA-favorable ($S=1000$, $N=1$) & 1,000 & 0.99 & 0.951 \\
\method ($|\mathcal{V}|=10^6$, $N=1000$) & 1,000,000 & 0.99 & 0.988 \\
\bottomrule
\end{tabular}
\end{table}

\textbf{Note on guarantee scope.} This bounds feasibility, not optimality.  We do not claim the returned solution is close to the true chance-constrained optimum, only that its feasibility probability admits the stated lower bound.

\clearpage
\section{Algorithmic Details}
\label{app:boccp_analysis}

This section gives the hyperparameters and the empirical behaviour of the search over $(\gamma, \beta)$.

\subsection{Hyperparameter Settings}
\label{app:hyperparameters}

Table~\ref{tab:hyperparams} summarizes the hyperparameters used for \method in all experiments. The total number of MILP solves per instance is $N_{\text{scen}} \times (N_{\text{init}} + N_{\text{iter}}) + 40 = 1000 \times 15 + 40 = 15{,}040$, of which the selection in line 10 of Algorithm~\ref{alg:access} retains one per scenario, giving $N = 1000$ candidates for selection.  Each solve is single-threaded with a 3\,s limit, and scenarios are processed in parallel.

\begin{table}[h!]
  \centering
  \caption{\method hyperparameter settings used in all experiments.}
  \label{tab:hyperparams}
  \small
  \begin{tabular}{lrl}
    \toprule
    Parameter & Value & Description \\
    \midrule
    \multicolumn{3}{l}{\textit{Scenario Sampling}} \\
    $N_{\text{scen}}$ & 1000 & Scenarios drawn from the problem's own distribution $\mathbb{P}$ \\
    \midrule
    \multicolumn{3}{l}{\textit{Bayesian Optimization (per scenario)}} \\
    $N_{\text{init}}$ & 5 & Initial Sobol samples \\
    $N_{\text{iter}}$ & 10 & BO iterations after initialization \\
    Acquisition & qLogEI & Log expected improvement, feasibility-constrained \\
    \midrule
    \multicolumn{3}{l}{\textit{Search Space}} \\
    $\gamma$ range & $[0,\, 2\hat{\gamma}_0]$ & Tightening; $\hat{\gamma}_0$ calibrated per instance, capped at 20 \\
    $\beta$ range & $[0,\, 1]$ & Mixing weight, nominal to sampled scenario \\
    Bracket points & 40 & Nominal solves calibrating $\hat{\gamma}_0$ \\
    \midrule
    \multicolumn{3}{l}{\textit{Subproblem}} \\
    Time limit & 3\,s & Per MILP solve, single thread \\
    MILP gap & $10^{-4}$ & Gurobi default \\
    \midrule
    \multicolumn{3}{l}{\textit{Evaluation}} \\
    $N$ (candidates) & 1000 & Number of candidate solutions for final selection \\
    $|\mathcal{V}'|$  & $10^4$ & Scenarios scoring each point within a scenario's search \\
    $|\mathcal{V}|$   & $10^6$ & Scenarios selecting among the $N$ pooled candidates \\
    $|\mathcal{T}|$   & $10^6$ & Held-out scenarios for evaluation (disjoint from $\mathcal{V}$/$\mathcal{V}'$) \\
    \midrule
    \multicolumn{3}{l}{\textit{Certified Feasibility (optional)}} \\
    $\delta$ & 0.05 & Failure probability for the Hoeffding bound \\
    \bottomrule
  \end{tabular}
\end{table}

\subsection{Estimating the Per-Row Scale}
\label{app:sigma_estimate}

This section details how the per-row scale $\hat{\boldsymbol{\sigma}}$ is computed.
Appendix~\ref{app:het_ablation} reports an ablation on its efficacy when the uncertainty is heterogeneous across constraints.
Throughout, $\mathcal{S}_\sigma = \{\boldsymbol{\xi}^{(1)}, \ldots, \boldsymbol{\xi}^{(n_\sigma)}\}$ is a set of $n_\sigma$ scenarios drawn from $\mathbb{P}$ for this estimate alone, disjoint from the scenarios used to generate candidates and from $\mathcal{V}'$, $\mathcal{V}$ and $\mathcal{T}$.

\paragraph{Left-hand-side uncertainty.}
In the context of our experiments, we consider multiplicative left-hand-side uncertainty, and measure for each row how far its coefficients move relative to their nominal values,
\begin{equation}
r_{d,i}(\boldsymbol{\xi}) \;=\; \frac{\mathbf{A}(\boldsymbol{\xi})_{d,i} - \bar{\mathbf{A}}_{d,i}}{\lvert \bar{\mathbf{A}}_{d,i}\rvert},
\qquad
\hat{\sigma}_{d} \;=\; \frac{1}{n}\sum_{i=1}^{n}
\sqrt{\frac{1}{\lvert\mathcal{S}_\sigma\rvert - 1}\sum_{\boldsymbol{\xi} \in \mathcal{S}_\sigma}
\Big(r_{d,i}(\boldsymbol{\xi}) - \bar{r}_{d,i}\Big)^{2}},
\label{eq:sigma_estimate}
\end{equation}
where $i$ indexes the $n$ decision variables and $d$ the rows of the joint constraint, $\mathbf{A}(\boldsymbol{\xi})_{d,i}$ is the coefficient of variable $i$ in row $d$ under scenario $\boldsymbol{\xi}$, $\bar{\mathbf{A}}_{d,i}$ is that same coefficient in the nominal scenario, and $\bar{r}_{d,i}$ is the mean of $r_{d,i}$ over $\mathcal{S}_\sigma$.
Each row therefore carries the average relative volatility of its coefficients, measured in units of their nominal values.

\paragraph{Generalizing beyond left-hand-side uncertainty.}
The same computation applies to the right-hand side, with $r_{d,i}$ replaced by $(b(\boldsymbol{\xi})_d - \bar{b}_d)/\lvert\bar{b}_d\rvert$.
A formulation carrying uncertainty on one side only uses the corresponding scale directly.  However, when both are uncertain, we need to combine the two deviations to construct a single scale per row, e.g., by summing them.

\subsection{BO Convergence}
\label{app:boccp_convergence}

In this section, we analyze the performance as a function of the number of iterations of BO.
Within a scenario, the running best is the best objective among the points solved so far whose quick feasibility clears $\alpha$.
Figure~\ref{fig:bo_convergence} plots it after $k$ BO iterations as a percentage over the best of the initial quasi-random samples, averaged over all instance/scenario sets.  
Generally, BO increases with the number of iterations, with some exceptions, e.g., SRCSP at low $\alpha$, where the initialization was good enough to find high-quality solutions.  

\begin{figure}[h!]
    \centering
    \includegraphics[width=\textwidth]{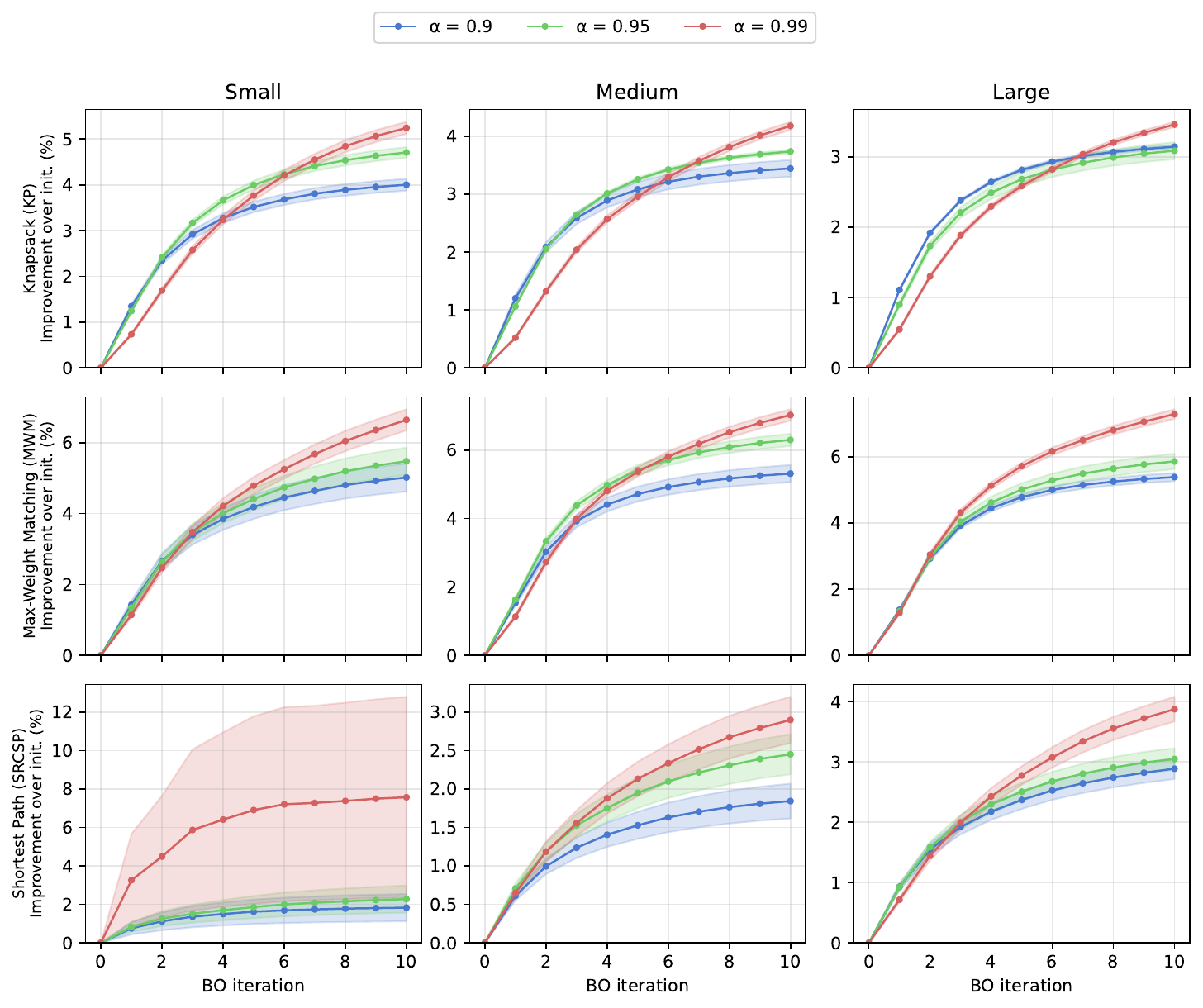}
    \caption{Improvements based on the number of BO iterations.  Rows are benchmarks and columns are instance sizes.  Iteration 0 is the end of the initial design, so the curve starts at 0\% by construction. Shaded regions are the standard error across the 20 instances.}
    \label{fig:bo_convergence}
\end{figure}

\subsection{Scaling with Candidate Budget}
\label{app:boccp_scaling}

Figure~\ref{fig:fo_gap_vs_n} shows how solution quality (measured by FO-gap) scales with the number of candidates $N$. For each instance, we subsample the first $N$ candidates from the full \method run and report the FO-gap relative to the best feasible objective at $N{=}1000$.
The curves show rapid initial improvement followed by diminishing returns.
SRCSP converges almost immediately, reaching the full-pool objective by $N{=}50$ in every cell.
KP and MWM need more candidates but close to convergence on average by $N{=}100$, although individual instances still benefit from more iterations.  

\begin{figure}[h!]
    \centering
    \includegraphics[width=\textwidth]{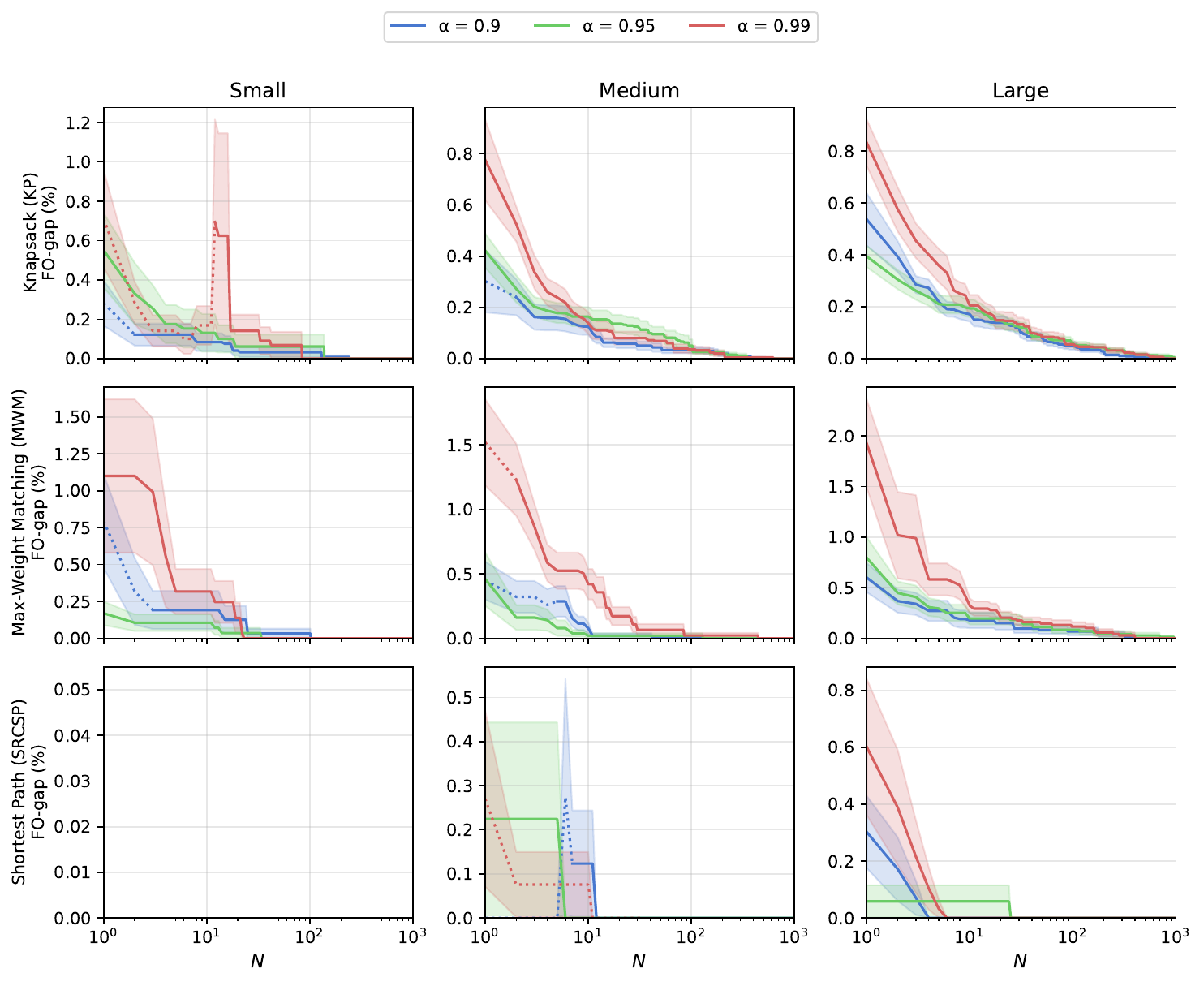}
    \caption{FO-gap vs.\ number of candidates $N$ (log scale). Rows correspond to benchmarks (KP, MWM, SRCSP) and columns to instance sizes (Small, Medium, Large). FO-gap is computed relative to the best FO at $N{=}1000$. Higher $\alpha$ (red) generally requires more candidates to achieve low FO-gap. Shaded regions show standard error across 20 test instances.}
    \label{fig:fo_gap_vs_n}
\end{figure}

\subsection{Parameter Space Analysis}
\label{app:boccp_param_analysis}

Figure~\ref{fig:optimal_params} shows the selected parameters $(\gamma^\star, \beta^\star)$ for each test instance, where $(\gamma^\star, \beta^\star)$ are the parameters of the candidate chosen by the certified rule. The scatter plots reveal distinct patterns across benchmarks.  For KP, the selected $\gamma^\star$ concentrates in $[0.3, 0.8]$ with $\beta^\star$ close to zero.  For MWM, $\gamma^\star$ is small with $\beta^\star$ near zero at $\alpha \in \{.90, .95\}$ and rising at $\alpha{=}.99$.  Lastly, for SRCSP both parameters spread more broadly, although this may be due to the instances being easier and having more viable good solutions.

\begin{figure}[h!]
    \centering
    \includegraphics[width=\textwidth]{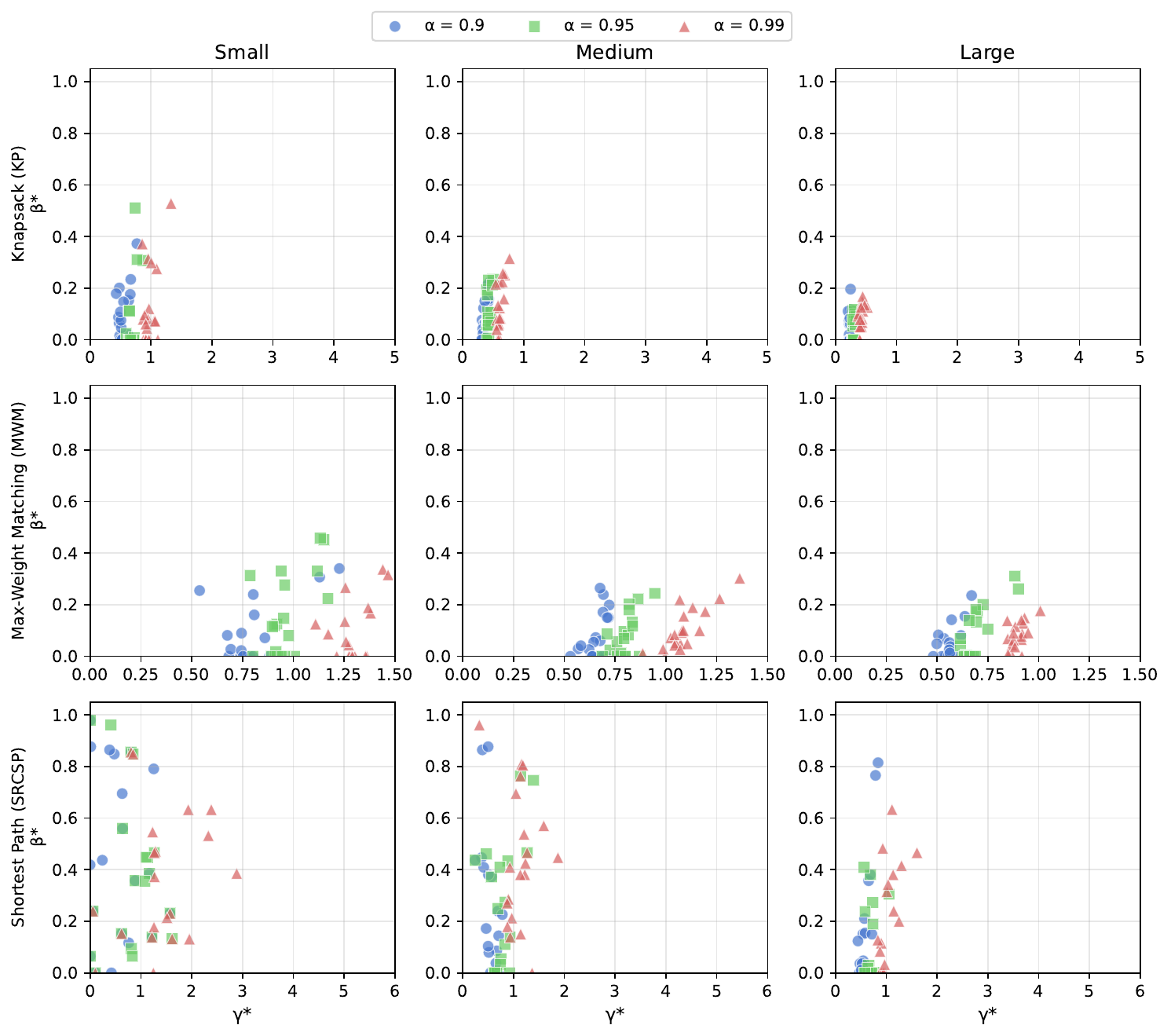}
    \caption{Selected parameters $(\gamma^\star, \beta^\star)$ per instance. Each point represents one test instance; colors denote $\alpha \in \{0.90, 0.95, 0.99\}$. Rows correspond to benchmarks and columns to instance sizes.}
    \label{fig:optimal_params}
\end{figure}

\clearpage
\section{Non-Standard Uncertainty}
\label{app:nonstandard}

The benchmarks of Appendix~\ref{app:benchmarks} use a single homogeneous $\sigma$ and
i.i.d.\ Gaussian $\xi_i$, which is what the classical safe approximations assume.
\method needs neither.  It draws scenarios from the problem's own distribution and scores candidates
by their empirical feasibility, so neither the search nor the selection rule refers to the
distribution.
We test this in two ways.  Appendix~\ref{app:het_ablation} varies the scale across constraints and
keeps $\boldsymbol{\xi}$ Gaussian, and Appendix~\ref{app:nongaussian} replaces the distribution
itself with four non-Gaussian alternatives.  The instances, seeds and multiplicative model are
those of Appendix~\ref{app:benchmarks} throughout, and only the uncertainty changes.

\subsection{Heterogeneous Uncertainty Ablation}
\label{app:het_ablation}

\method normalizes constraint slack by a per-row $\sigma_d$, so rows with different noise levels are tightened differently.
We compare against a variant that uses the single scalar $\bar{\sigma} = \frac{1}{d}\sum_{i=1}^d \sigma_i$ instead.
We run the baselines on the same instances, so the cost of the scalar simplification can be read against what they pay rather than against \method alone.

We evaluate on four KP variants at medium size ($n{=}50$, $d{=}5$), which differ only in
$\boldsymbol{\sigma}$ and share the same mean within each pair:
\begin{itemize}[nosep,leftmargin=*]
  \item \textbf{hom\_20}: $\boldsymbol{\sigma} = (0.20, 0.20, 0.20, 0.20, 0.20)$, i.e., the Medium KP of the main results for reference
  \item \textbf{hom\_35}: $\boldsymbol{\sigma} = (0.35, 0.35, 0.35, 0.35, 0.35)$
  \item \textbf{het\_20}: $\boldsymbol{\sigma} = (0.02, 0.11, 0.20, 0.29, 0.38)$
  \item \textbf{het\_35}: $\boldsymbol{\sigma} = (0.02, 0.185, 0.35, 0.515, 0.68)$
\end{itemize}
The heterogeneous rows are evenly spaced about the same mean as their homogeneous
counterpart, so a scalar $\bar{\sigma}$ is the correct mean in every setting and only its spread
changes.

Table~\ref{tab:het_ablation} reports the comparison.  \method performs similarly across the four settings as it consistently computes best-known feasible.
The scalar-$\bar{\sigma}$ variant matches it on the homogeneous settings, where the two normalizations
coincide, and degrades as the spread grows and as $\alpha$ tightens, since one $\bar{\sigma}$ misprices
the rows that bind.  At its worst it pays $10.23\%$ against GB's $0.68\%$ on the same instances.
The per-row $\sigma_d$ is therefore not a refinement, and justified to handle more general classes of uncertainty.

\begin{table}[h!]
  \caption{Heterogeneous uncertainty on the knapsack benchmark ($n{=}50$, $d{=}5$).
    Each cell reports FO-gap\,/\,$\alpha$-feas rate over 20 seeds, with feasibility measured on
    $10^6$ held-out scenarios.  The four settings share instances and seeds and differ only in the
    per-row $\sigma_d$; hom\_20 is Medium KP from the main results.}
  \label{tab:het_ablation}
  \centering
  \small
  \setlength\tabcolsep{4pt}
  \begin{tabular}{ll rrr}
    \toprule
    & Method & $\alpha{=}.90$ & $\alpha{=}.95$ & $\alpha{=}.99$ \\
    \midrule
    \multirow{8}{*}{\textbf{hom\_20}}
      & SAA (S=1k) & 0.21\%\,/\,40\% & 0.19\%\,/\,25\% & ---\,/\,0\% \\
      & SAA (S=10k) & 0.12\%\,/\,90\% & 0.04\%\,/\,75\% & 0.03\%\,/\,30\% \\
      & CVaR (S=1k) & 4.16\%\,/\,100\% & 3.40\%\,/\,100\% & 1.37\%\,/\,85\% \\
      & CVaR (S=10k) & 4.51\%\,/\,100\% & 4.02\%\,/\,100\% & 3.12\%\,/\,100\% \\
      & GB & 3.00\%\,/\,100\% & 2.65\%\,/\,100\% & 2.14\%\,/\,100\% \\
      & \method (scalar $\bar{\sigma}$) & 0.00\%\,/\,95\% & 0.04\%\,/\,100\% & 0.00\%\,/\,100\% \\
      & \method & 0.00\%\,/\,95\% & 0.04\%\,/\,100\% & 0.00\%\,/\,100\% \\
      & \methoddelta & 0.06\%\,/\,100\% & 0.11\%\,/\,100\% & 0.39\%\,/\,100\% \\
    \midrule
    \multirow{8}{*}{\textbf{hom\_35}}
      & SAA (S=1k) & 0.37\%\,/\,15\% & 0.57\%\,/\,5\% & ---\,/\,0\% \\
      & SAA (S=10k) & 0.21\%\,/\,90\% & 0.28\%\,/\,70\% & 0.18\%\,/\,35\% \\
      & CVaR (S=1k) & 6.65\%\,/\,100\% & 5.08\%\,/\,100\% & 1.72\%\,/\,60\% \\
      & CVaR (S=10k) & 7.52\%\,/\,100\% & 6.58\%\,/\,100\% & 4.87\%\,/\,100\% \\
      & GB & 5.04\%\,/\,100\% & 4.38\%\,/\,100\% & 3.40\%\,/\,100\% \\
      & \method (scalar $\bar{\sigma}$) & 0.06\%\,/\,100\% & 0.03\%\,/\,100\% & 0.00\%\,/\,90\% \\
      & \method & 0.04\%\,/\,100\% & 0.03\%\,/\,100\% & 0.00\%\,/\,90\% \\
      & \methoddelta & 0.15\%\,/\,100\% & 0.17\%\,/\,100\% & 0.65\%\,/\,100\% \\
    \midrule
    \multirow{8}{*}{\textbf{het\_20}}
      & SAA (S=1k) & 0.02\%\,/\,90\% & 0.03\%\,/\,75\% & 0.01\%\,/\,40\% \\
      & SAA (S=10k) & 0.15\%\,/\,100\% & 0.07\%\,/\,90\% & 0.00\%\,/\,95\% \\
      & CVaR (S=1k) & 0.86\%\,/\,100\% & 0.72\%\,/\,100\% & 0.46\%\,/\,100\% \\
      & CVaR (S=10k) & 0.89\%\,/\,100\% & 0.76\%\,/\,100\% & 0.65\%\,/\,100\% \\
      & GB & 0.58\%\,/\,100\% & 0.46\%\,/\,100\% & 0.36\%\,/\,100\% \\
      & \method (scalar $\bar{\sigma}$) & 0.10\%\,/\,100\% & 0.43\%\,/\,100\% & 2.93\%\,/\,90\% \\
      & \method & 0.00\%\,/\,100\% & 0.00\%\,/\,95\% & 0.00\%\,/\,100\% \\
      & \methoddelta & 0.02\%\,/\,100\% & 0.02\%\,/\,100\% & 0.07\%\,/\,100\% \\
    \midrule
    \multirow{8}{*}{\textbf{het\_35}}
      & SAA (S=1k) & 0.11\%\,/\,60\% & 0.05\%\,/\,60\% & 0.03\%\,/\,25\% \\
      & SAA (S=10k) & 0.44\%\,/\,95\% & 0.24\%\,/\,95\% & 0.00\%\,/\,50\% \\
      & CVaR (S=1k) & 1.31\%\,/\,100\% & 1.15\%\,/\,100\% & 0.75\%\,/\,100\% \\
      & CVaR (S=10k) & 1.36\%\,/\,100\% & 1.32\%\,/\,100\% & 1.10\%\,/\,100\% \\
      & GB & 0.78\%\,/\,100\% & 0.73\%\,/\,100\% & 0.68\%\,/\,100\% \\
      & \method (scalar $\bar{\sigma}$) & 2.00\%\,/\,100\% & 4.63\%\,/\,95\% & 10.23\%\,/\,100\% \\
      & \method & 0.00\%\,/\,100\% & 0.00\%\,/\,100\% & 0.02\%\,/\,95\% \\
      & \methoddelta & 0.01\%\,/\,100\% & 0.06\%\,/\,100\% & 0.17\%\,/\,100\% \\
    \bottomrule
  \end{tabular}
\end{table}

\subsection{Non-Gaussian Uncertainty}
\label{app:nongaussian}

The mixture of \eqref{eq:rescaling} makes no distributional assumption: it requires only that
constraint data can be mixed, not that $\boldsymbol{\xi}$ be Gaussian, independent or light-tailed.
To test this we repeat the knapsack benchmark under four alternative distributions.
The instances, the seeds, and the multiplicative model
$\mathbf{A}(\boldsymbol{\xi})_{d,i} = w_{d,i}\max(1 + \sigma_d \xi_i,\, 0)$ are identical to the
Gaussian benchmark.  Only the distribution of $\boldsymbol{\xi} \in \mathbb{R}^n$ changes, and each
one below is zero-mean.

\begin{itemize}[nosep,leftmargin=*]
  \item \textbf{Student-$t_3$}, heavy tails with independent coordinates:
        \[
          \xi_i = t_i \big/ \sqrt{\nu/(\nu-2)}, \qquad t_i \overset{\text{iid}}{\sim} t_\nu, \quad \nu = 3 .
        \]

  \item \textbf{Jump}, rare disruptions with a discrete right-skewed component:
        \[
          \xi_i = z_i + J\,(B_i - p), \qquad z_i \overset{\text{iid}}{\sim} \mathcal{N}(0,1), \quad
          B_i \overset{\text{iid}}{\sim} \operatorname{Bernoulli}(p), \quad p = 0.03,\; J = 6 ,
        \]
        with $z$ and $B$ independent.  A coefficient jumps by roughly $\sigma_d J$ with probability
        $p$.
  \item \textbf{Clustered}, correlation aligned with the objective:
        \[
          \xi_i = \sqrt{\rho}\, f_{c(i)} + \sqrt{1-\rho}\, z_i, \qquad
          f_1, f_2, z_i \overset{\text{iid}}{\sim} \mathcal{N}(0,1), \quad \rho = 0.85 ,
        \]
        where $f_1$ and $f_2$ are one factor per cluster and $c(i) \in \{1,2\}$ splits the items by value-to-mean-weight ratio\textcolor{red}{,} and cluster $1$ holds
        the $n/2$ most attractive.  The alignment is deliberate: the items a solution wants to take are
        the ones whose uncertainty moves together, so diversifying costs objective value.

  \item \textbf{Risky subset}, item scales that differ within a row:
        \[
          \xi_i = \begin{cases}
            s\,\big(\sqrt{\rho}\, f + \sqrt{1-\rho}\, z_i\big), & i \in \mathcal{R}, \\[2pt]
            z_i, & \text{otherwise,}
          \end{cases}
          \qquad f, z_i \overset{\text{iid}}{\sim} \mathcal{N}(0,1), \quad s = 3, \; \rho = 0.8 ,
        \]
        where $\mathcal{R}$ is a fixed random $30\%$ of the items, drawn from the instance seed.  The
        risky items have standard deviation $s$ and share the single factor $f$; the rest keep the benchmark
        scale and stay independent.
\end{itemize}

Each distribution stresses a different assumption.  Student-$t_3$ removes light-tail structure, 
jump introduces a skewed discrete component with no Gaussian analogue,
clustering removes independence in the way that most penalises a per-row scalar tightening, and the
risky subset gives the items within a row scales that differ by a factor of three.
We omit the GB baseline: its per-row quantile
presumes i.i.d.\ standard
Gaussian $\xi_i$, and each distribution violates a different part of that.
SAA and CVaR are distribution-free in construction and draw from the same scenario generator, so they
transfer unchanged.

\begin{table}[h!]
  \caption{Non-Gaussian uncertainty on the knapsack benchmark.  Each cell reports
    FO-gap\,/\,$\alpha$-feas rate over 20 seeds, with feasibility measured on $10^6$ held-out
    scenarios from the same distribution.}
  \label{tab:distributions}
  \centering
  \resizebox{\textwidth}{!}{%
  \setlength\tabcolsep{3pt}
  \begin{tabular}{ll rrr rrr}
    \toprule
    & & \multicolumn{3}{c}{$n{=}50$, $d{=}5$} & \multicolumn{3}{c}{$n{=}100$, $d{=}10$} \\
    \cmidrule(lr){3-5}\cmidrule(lr){6-8}
    & Method & $\alpha{=}.90$ & $\alpha{=}.95$ & $\alpha{=}.99$
           & $\alpha{=}.90$ & $\alpha{=}.95$ & $\alpha{=}.99$ \\
    \midrule
    \multirow{6}{*}{\textbf{Student-$t_3$}}
      & SAA (S=1k) & 0.13\%\,/\,40\% & 0.15\%\,/\,25\% & ---\,/\,0\% & 0.00\%\,/\,10\% & ---\,/\,0\% & ---\,/\,0\% \\
      & SAA (S=10k) & 0.44\%\,/\,80\% & 0.12\%\,/\,55\% & 0.03\%\,/\,40\% & 0.35\%\,/\,95\% & 0.06\%\,/\,70\% & 0.02\%\,/\,25\% \\
      & CVaR (S=1k) & 5.24\%\,/\,100\% & 4.77\%\,/\,100\% & 2.84\%\,/\,50\% & 3.86\%\,/\,100\% & 3.09\%\,/\,100\% & 1.19\%\,/\,20\% \\
      & CVaR (S=10k) & 6.31\%\,/\,100\% & 6.71\%\,/\,100\% & 7.80\%\,/\,100\% & 5.23\%\,/\,100\% & 5.43\%\,/\,100\% & 4.88\%\,/\,100\% \\
      & \method & 0.00\%\,/\,100\% & 0.01\%\,/\,100\% & 0.00\%\,/\,90\% & 0.00\%\,/\,100\% & 0.03\%\,/\,95\% & 0.02\%\,/\,85\% \\
      & \methoddelta & 0.05\%\,/\,100\% & 0.13\%\,/\,100\% & 0.71\%\,/\,100\% & 0.02\%\,/\,100\% & 0.07\%\,/\,100\% & 0.37\%\,/\,100\% \\
    \midrule
    \multirow{6}{*}{\textbf{Jump}}
      & SAA (S=1k) & 0.32\%\,/\,15\% & 0.34\%\,/\,15\% & ---\,/\,0\% & 0.02\%\,/\,15\% & ---\,/\,0\% & ---\,/\,0\% \\
      & SAA (S=10k) & 0.04\%\,/\,55\% & 0.03\%\,/\,75\% & 0.00\%\,/\,20\% & 0.17\%\,/\,85\% & 0.04\%\,/\,55\% & 0.00\%\,/\,25\% \\
      & CVaR (S=1k) & 7.44\%\,/\,100\% & 6.00\%\,/\,100\% & 1.81\%\,/\,50\% & 4.92\%\,/\,100\% & 3.49\%\,/\,100\% & ---\,/\,0\% \\
      & CVaR (S=10k) & 8.54\%\,/\,100\% & 7.85\%\,/\,100\% & 5.86\%\,/\,100\% & 6.50\%\,/\,100\% & 5.78\%\,/\,100\% & 3.77\%\,/\,100\% \\
      & \method & 0.06\%\,/\,95\% & 0.10\%\,/\,95\% & 0.05\%\,/\,95\% & 0.10\%\,/\,95\% & 0.13\%\,/\,95\% & 0.08\%\,/\,100\% \\
      & \methoddelta & 0.14\%\,/\,100\% & 0.24\%\,/\,100\% & 0.93\%\,/\,100\% & 0.13\%\,/\,100\% & 0.20\%\,/\,100\% & 0.59\%\,/\,100\% \\
    \midrule
    \multirow{6}{*}{\textbf{Clustered}}
      & SAA (S=1k) & 0.48\%\,/\,50\% & 0.52\%\,/\,35\% & 0.71\%\,/\,30\% & 0.58\%\,/\,30\% & 0.61\%\,/\,25\% & 0.56\%\,/\,15\% \\
      & SAA (S=10k) & 0.07\%\,/\,70\% & 0.07\%\,/\,80\% & 0.15\%\,/\,40\% & 0.09\%\,/\,70\% & 0.08\%\,/\,45\% & 0.06\%\,/\,55\% \\
      & CVaR (S=1k) & 10.93\%\,/\,100\% & 9.26\%\,/\,100\% & 5.24\%\,/\,100\% & 12.53\%\,/\,100\% & 10.35\%\,/\,100\% & 5.62\%\,/\,95\% \\
      & CVaR (S=10k) & 10.84\%\,/\,100\% & 9.36\%\,/\,100\% & 6.91\%\,/\,100\% & 12.64\%\,/\,100\% & 10.90\%\,/\,100\% & 8.22\%\,/\,100\% \\
      & \method & 0.14\%\,/\,100\% & 0.19\%\,/\,100\% & 0.07\%\,/\,90\% & 0.09\%\,/\,100\% & 0.10\%\,/\,95\% & 0.07\%\,/\,80\% \\
      & \methoddelta & 0.21\%\,/\,100\% & 0.31\%\,/\,100\% & 0.80\%\,/\,100\% & 0.17\%\,/\,100\% & 0.22\%\,/\,100\% & 1.03\%\,/\,100\% \\
    \midrule
    \multirow{6}{*}{\textbf{Risky subset}}
      & SAA (S=1k) & 0.12\%\,/\,60\% & 0.06\%\,/\,40\% & ---\,/\,0\% & 0.00\%\,/\,30\% & 0.00\%\,/\,20\% & 0.00\%\,/\,5\% \\
      & SAA (S=10k) & 1.21\%\,/\,85\% & 0.43\%\,/\,90\% & 0.09\%\,/\,80\% & 1.44\%\,/\,95\% & 0.32\%\,/\,75\% & 0.00\%\,/\,35\% \\
      & CVaR (S=1k) & 4.15\%\,/\,100\% & 3.40\%\,/\,100\% & 1.55\%\,/\,95\% & 3.12\%\,/\,100\% & 2.28\%\,/\,100\% & 0.69\%\,/\,75\% \\
      & CVaR (S=10k) & 4.42\%\,/\,100\% & 3.88\%\,/\,100\% & 2.93\%\,/\,100\% & 3.47\%\,/\,100\% & 2.92\%\,/\,100\% & 2.01\%\,/\,100\% \\
      & \method & 0.05\%\,/\,100\% & 0.06\%\,/\,100\% & 0.20\%\,/\,100\% & 0.12\%\,/\,100\% & 0.20\%\,/\,100\% & 0.32\%\,/\,100\% \\
      & \methoddelta & 0.09\%\,/\,100\% & 0.08\%\,/\,100\% & 0.51\%\,/\,100\% & 0.15\%\,/\,100\% & 0.23\%\,/\,100\% & 0.61\%\,/\,100\% \\
    \bottomrule
  \end{tabular}}
\end{table}

Table~\ref{tab:distributions} reports every method on the four distributions.
\methoddelta is $\alpha$-feasible on all $480$ instances under every distribution, while
retaining small gaps, and the uncorrected rule attains $0.00$--$0.32\%$ at $80$--$100\%$ feasibility.
The sampled baselines degrade as expected with SAA not finding feasible solutions on large high $\alpha$ instances,
SAA is feasible on $0$--$60\%$ of instances at $|\mathcal{S}|{=}10^3$ and on $20$--$95\%$ at $10^4$,
while CVaR stays feasible at large $|\mathcal{S}|$ but is worse in objective.  
In some cases, CVaR falls below $\alpha$ due to the sample estimate rather than the bound: the
constraint is safe for the true tail mean, but we impose it on a sample average, and at the
tightest $\alpha$ the Bonferroni split leaves too few scenarios in each row's tail to estimate it.
The mixture gains most on these distributions as well (Appendix~\ref{app:boccp_gamma_only}).

\clearpage
\section{Ablation: Nominal-Only Tightening ($\beta{=}0$)}
\label{app:boccp_gamma_only}

The scenario mixture is what separates \method from a purely parametric tightening, so we isolate its
contribution against the strongest nominal reference the formulation admits.
The $\beta{=}0$ variant runs the same pipeline, i.e., the same $\gamma$ bracket and the same selection
rule, with the constraint data fixed at its nominal value.  Nothing then varies across scenarios, so a
grid over $\gamma$ exhausts the search and we use $100$ points.
The ablation therefore measures the advantage of searching over $(\gamma, \beta)$ per sampled scenario rather than over the tightening alone.

On the Gaussian benchmarks, Table~\ref{tab:gamma_ablation} shows \method at or below $0.04\%$ FO-gap
in every cell and ahead of or level with the $\beta{=}0$ variant throughout.
Table~\ref{tab:ablation_summary} extends the comparison to the non-Gaussian distributions, where the
advantage is larger.
The nominal-only variant tightens rows by a single scalar,
so it can identify how much rows vary but not how.  In contrast, a sampled scenario is
a vector, which captures how the uncertainty affects each variable.
Under homogeneous Gaussian noise, the gains of including the scenario are marginal.
Student-$t_3$ and the clustered distribution are scaled to unit variance, so that scalar is unchanged
from the Gaussian case, yet the gain roughly doubles: heavy tails and correlation hurt the nominal
tightening even though they do not change the scalar that sizes it.
On the risky subset it reaches $+16.6\%$, where a fixed $30\%$ of the items carry three times the noise
of the rest, and a single scalar cannot distinguish the two groups.
The median $\beta^\star$ is $0.36$ on the risky subset and no instance
there sets $\beta^\star = 0$, against a median of $0.06$--$0.07$ on Gaussian knapsack and matching,
where about a third do.

\begin{table}[h!]
  \centering
  \caption{Ablation: \method vs.\ the nominal-only ($\beta{=}0$) variant, both under the
    uncertified rule.  Rows report FO-gap
    (\%) against BKO; $\Delta$ is the mean paired objective difference (positive $=$ \method ahead).}
  \label{tab:gamma_ablation}
  \small
  \begin{tabular}{ll ccc ccc ccc}
    \toprule
    & & \multicolumn{3}{c}{Small} & \multicolumn{3}{c}{Medium} & \multicolumn{3}{c}{Large} \\
    \cmidrule(lr){3-5}\cmidrule(lr){6-8}\cmidrule(lr){9-11}
    & Method & $\alpha{=}.90$ & $\alpha{=}.95$ & $\alpha{=}.99$
           & $\alpha{=}.90$ & $\alpha{=}.95$ & $\alpha{=}.99$
           & $\alpha{=}.90$ & $\alpha{=}.95$ & $\alpha{=}.99$ \\
    \midrule
    \multirow{3}{*}{\textbf{KP}}
      & \method & 0.00 & 0.01 & 0.00 & 0.00 & 0.04 & 0.00 & 0.01 & 0.01 & 0.00 \\
      & $\beta{=}0$ & 0.09 & 0.18 & 0.11 & 0.19 & 0.28 & 0.26 & 0.34 & 0.33 & 0.37 \\
      & $\Delta$ & +0.09 & +0.17 & +0.11 & +0.20 & +0.24 & +0.26 & +0.33 & +0.33 & +0.37 \\
    \midrule
    \multirow{3}{*}{\textbf{MWM}}
      & \method & 0.00 & 0.00 & 0.00 & 0.00 & 0.00 & 0.00 & 0.00 & 0.00 & 0.00 \\
      & $\beta{=}0$ & 0.19 & 0.17 & 0.43 & 0.35 & 0.21 & 0.73 & 0.26 & 0.39 & 0.66 \\
      & $\Delta$ & +0.20 & +0.17 & +0.44 & +0.35 & +0.20 & +0.74 & +0.26 & +0.40 & +0.63 \\
    \midrule
    \multirow{3}{*}{\textbf{SRCSP}}
      & \method & 0.00 & 0.00 & 0.00 & 0.03 & 0.00 & 0.00 & 0.04 & 0.00 & 0.00 \\
      & $\beta{=}0$ & 0.00 & 0.00 & 0.00 & 0.15 & 0.00 & 0.05 & 0.04 & 0.06 & 0.04 \\
      & $\Delta$ & +0.00 & +0.00 & +0.00 & +0.13 & +0.00 & +0.05 & -0.00 & +0.06 & +0.04 \\
    \bottomrule
  \end{tabular}
\end{table}

\begin{table}[h!]
  \centering
  \caption{Mean paired objective gain of \method over the nominal-only ($\beta{=}0$) variant,
    both under the uncertified rule.  Win, tie and loss count the instances on which \method is ahead,
    equal and behind.}
  \label{tab:ablation_summary}
  \small
  \begin{tabular}{lrrrrr}
    \toprule
    Benchmark & Instances & Mean gain & Win & Tie & Loss \\
    \midrule
    KP & 180 & +0.23\% & 114 & 65 & 1 \\
    MWM & 180 & +0.38\% & 67 & 113 & 0 \\
    SRCSP & 180 & +0.03\% & 5 & 174 & 1 \\
    Student-$t_3$ & 120 & +0.38\% & 104 & 14 & 2 \\
    Jump & 120 & +0.72\% & 117 & 3 & 0 \\
    Clustered & 120 & +0.81\% & 110 & 4 & 6 \\
    Risky subset & 120 & +16.60\% & 120 & 0 & 0 \\
    \bottomrule
  \end{tabular}
\end{table}

\clearpage

\section{Baseline Analysis}
\label{app:baseline_analysis}

\subsection{SAA}
\label{app:saa_analysis}

Table~\ref{tab:saa_combined} reports FO-gap and $\alpha$-feasibility rate for SAA at sample sizes $|\mathcal{S}| \in \{10, 100, 1000, 10000\}$.
Small samples hold in-sample but fail on held-out data, while large ones ($10^4$) reach the time limit and return a conservative incumbent, more often feasible but at a larger gap.

\begin{table}[h!]
  \caption{SAA results varying sample size $|\mathcal{S}|$. Each cell reports FO-gap\,/\,$\alpha$-feas rate. Time limit: 3600\,s.}
  \label{tab:saa_combined}
  \centering
  \resizebox{\textwidth}{!}{%
  \setlength\tabcolsep{3pt}
  \begin{tabular}{ll rrr rrr rrr}
    \toprule
    & & \multicolumn{3}{c}{Small}
      & \multicolumn{3}{c}{Medium}
      & \multicolumn{3}{c}{Large} \\
    \cmidrule(lr){3-5}\cmidrule(lr){6-8}\cmidrule(lr){9-11}
    & $|\mathcal{S}|$
      & $\alpha{=}.90$ & $\alpha{=}.95$ & $\alpha{=}.99$
      & $\alpha{=}.90$ & $\alpha{=}.95$ & $\alpha{=}.99$
      & $\alpha{=}.90$ & $\alpha{=}.95$ & $\alpha{=}.99$ \\
    \midrule
    \multirow{4}{*}{\textbf{KP}}
      & 10 & ---\,/\,0\% & ---\,/\,0\% & ---\,/\,0\% & ---\,/\,0\% & ---\,/\,0\% & ---\,/\,0\% & ---\,/\,0\% & ---\,/\,0\% & ---\,/\,0\% \\
      & 100 & 0.14\%\,/\,20\% & 0.00\%\,/\,10\% & ---\,/\,0\% & ---\,/\,0\% & ---\,/\,0\% & ---\,/\,0\% & ---\,/\,0\% & ---\,/\,0\% & ---\,/\,0\% \\
      & 1000 & 0.08\%\,/\,75\% & 0.11\%\,/\,50\% & 0.00\%\,/\,45\% & 0.21\%\,/\,40\% & 0.19\%\,/\,25\% & ---\,/\,0\% & 0.11\%\,/\,25\% & 0.29\%\,/\,5\% & ---\,/\,0\% \\
      & 10000 & 0.10\%\,/\,85\% & 0.10\%\,/\,70\% & 0.04\%\,/\,80\% & 0.12\%\,/\,90\% & 0.04\%\,/\,75\% & 0.03\%\,/\,30\% & 0.19\%\,/\,95\% & 0.14\%\,/\,70\% & 0.06\%\,/\,30\% \\
    \midrule
    \multirow{4}{*}{\textbf{MWM}}
      & 10 & ---\,/\,0\% & ---\,/\,0\% & ---\,/\,0\% & ---\,/\,0\% & ---\,/\,0\% & ---\,/\,0\% & ---\,/\,0\% & ---\,/\,0\% & ---\,/\,0\% \\
      & 100 & 0.41\%\,/\,45\% & 0.00\%\,/\,35\% & 0.00\%\,/\,15\% & 0.13\%\,/\,15\% & 0.00\%\,/\,5\% & ---\,/\,0\% & 0.55\%\,/\,10\% & ---\,/\,0\% & ---\,/\,0\% \\
      & 1000 & 0.09\%\,/\,75\% & 0.17\%\,/\,75\% & 0.13\%\,/\,55\% & 0.80\%\,/\,85\% & 0.62\%\,/\,55\% & 0.11\%\,/\,55\% & 1.38\%\,/\,65\% & 0.84\%\,/\,80\% & 0.04\%\,/\,40\% \\
      & 10000 & 3.49\%\,/\,95\% & 3.08\%\,/\,95\% & 0.52\%\,/\,95\% & 4.26\%\,/\,95\% & 4.28\%\,/\,100\% & 1.70\%\,/\,95\% & 5.23\%\,/\,95\% & 4.57\%\,/\,100\% & 1.84\%\,/\,95\% \\
    \midrule
    \multirow{4}{*}{\textbf{SRCSP}}
      & 10 & 0.00\%\,/\,55\% & 2.09\%\,/\,50\% & 0.00\%\,/\,20\% & 2.31\%\,/\,15\% & 1.68\%\,/\,20\% & ---\,/\,0\% & ---\,/\,0\% & 0.00\%\,/\,15\% & ---\,/\,0\% \\
      & 100 & 0.00\%\,/\,90\% & 0.13\%\,/\,90\% & 0.00\%\,/\,80\% & 0.12\%\,/\,85\% & 1.17\%\,/\,80\% & 0.39\%\,/\,45\% & 0.11\%\,/\,55\% & 0.18\%\,/\,40\% & 0.00\%\,/\,15\% \\
      & 1000 & 1.84\%\,/\,95\% & 0.00\%\,/\,100\% & 0.00\%\,/\,95\% & 0.00\%\,/\,90\% & 0.98\%\,/\,95\% & 0.79\%\,/\,70\% & 0.07\%\,/\,85\% & 0.28\%\,/\,75\% & 0.41\%\,/\,55\% \\
      & 10000 & 1.74\%\,/\,100\% & 0.00\%\,/\,95\% & 0.00\%\,/\,100\% & 0.52\%\,/\,100\% & 0.09\%\,/\,100\% & 0.00\%\,/\,85\% & 3.58\%\,/\,95\% & 2.09\%\,/\,100\% & 0.69\%\,/\,95\% \\
    \bottomrule
  \end{tabular}}
\end{table}

\subsection{CVaR}
\label{app:cvar_analysis}

Table~\ref{tab:cvar_combined} reports the same for CVaR, which adds only continuous auxiliary variables and so reaches the hour far less often than SAA.
Small samples leave one or two scenarios in each row's tail, so the estimate is not safe and $\alpha{=}0.99$ fails even at $|\mathcal{S}|{=}10^3$ scenarios. 
Moreover, the Bonferroni allocation leads to overly conservative solutions, and the degradation relative to SAA is significant.  

\begin{table}[h!]
  \caption{CVaR results varying sample size $|\mathcal{S}|$. Each cell reports FO-gap\,/\,$\alpha$-feas rate. Time limit: 3600\,s.}
  \label{tab:cvar_combined}
  \centering
  \resizebox{\textwidth}{!}{%
  \setlength\tabcolsep{3pt}
  \begin{tabular}{ll rrr rrr rrr}
    \toprule
    & & \multicolumn{3}{c}{Small}
      & \multicolumn{3}{c}{Medium}
      & \multicolumn{3}{c}{Large} \\
    \cmidrule(lr){3-5}\cmidrule(lr){6-8}\cmidrule(lr){9-11}
    & $|\mathcal{S}|$
      & $\alpha{=}.90$ & $\alpha{=}.95$ & $\alpha{=}.99$
      & $\alpha{=}.90$ & $\alpha{=}.95$ & $\alpha{=}.99$
      & $\alpha{=}.90$ & $\alpha{=}.95$ & $\alpha{=}.99$ \\
    \midrule
    \multirow{4}{*}{\textbf{KP}}
      & 10 & ---\,/\,0\% & ---\,/\,0\% & ---\,/\,0\% & 0.00\%\,/\,5\% & ---\,/\,0\% & ---\,/\,0\% & ---\,/\,0\% & ---\,/\,0\% & ---\,/\,0\% \\
      & 100 & 4.13\%\,/\,100\% & 2.59\%\,/\,80\% & 0.00\%\,/\,5\% & 2.47\%\,/\,90\% & 1.52\%\,/\,30\% & ---\,/\,0\% & 1.33\%\,/\,60\% & ---\,/\,0\% & ---\,/\,0\% \\
      & 1000 & 5.82\%\,/\,100\% & 4.88\%\,/\,100\% & 3.33\%\,/\,100\% & 4.16\%\,/\,100\% & 3.40\%\,/\,100\% & 1.37\%\,/\,85\% & 3.25\%\,/\,100\% & 2.45\%\,/\,100\% & 0.89\%\,/\,35\% \\
      & 10000 & 6.00\%\,/\,100\% & 5.26\%\,/\,100\% & 4.39\%\,/\,100\% & 4.51\%\,/\,100\% & 4.02\%\,/\,100\% & 3.12\%\,/\,100\% & 3.81\%\,/\,100\% & 3.34\%\,/\,100\% & 2.38\%\,/\,100\% \\
    \midrule
    \multirow{4}{*}{\textbf{MWM}}
      & 10 & 0.00\%\,/\,10\% & ---\,/\,0\% & ---\,/\,0\% & ---\,/\,0\% & ---\,/\,0\% & ---\,/\,0\% & ---\,/\,0\% & ---\,/\,0\% & ---\,/\,0\% \\
      & 100 & 6.59\%\,/\,100\% & 4.11\%\,/\,95\% & 0.15\%\,/\,25\% & 3.57\%\,/\,100\% & 2.42\%\,/\,80\% & ---\,/\,0\% & 2.60\%\,/\,100\% & 1.58\%\,/\,30\% & ---\,/\,0\% \\
      & 1000 & 9.20\%\,/\,100\% & 6.76\%\,/\,100\% & 5.15\%\,/\,100\% & 5.75\%\,/\,100\% & 5.18\%\,/\,100\% & 3.39\%\,/\,95\% & 4.49\%\,/\,100\% & 3.85\%\,/\,100\% & 1.94\%\,/\,100\% \\
      & 10000 & 9.49\%\,/\,100\% & 6.43\%\,/\,100\% & 6.57\%\,/\,100\% & 6.75\%\,/\,100\% & 6.42\%\,/\,100\% & 4.77\%\,/\,100\% & 6.61\%\,/\,100\% & 6.27\%\,/\,100\% & 4.68\%\,/\,100\% \\
    \midrule
    \multirow{4}{*}{\textbf{SRCSP}}
      & 10 & 4.72\%\,/\,80\% & 2.09\%\,/\,50\% & 0.00\%\,/\,20\% & 1.66\%\,/\,60\% & 1.68\%\,/\,20\% & ---\,/\,0\% & 0.42\%\,/\,40\% & 0.00\%\,/\,15\% & ---\,/\,0\% \\
      & 100 & 10.97\%\,/\,100\% & 7.40\%\,/\,100\% & 1.50\%\,/\,90\% & 7.62\%\,/\,100\% & 5.91\%\,/\,100\% & 1.74\%\,/\,75\% & 6.00\%\,/\,100\% & 3.77\%\,/\,100\% & 0.00\%\,/\,35\% \\
      & 1000 & 10.38\%\,/\,100\% & 7.16\%\,/\,100\% & 3.10\%\,/\,100\% & 8.56\%\,/\,100\% & 6.84\%\,/\,100\% & 6.95\%\,/\,100\% & 8.62\%\,/\,100\% & 7.44\%\,/\,100\% & 4.62\%\,/\,100\% \\
      & 10000 & 11.31\%\,/\,100\% & 7.86\%\,/\,100\% & 7.92\%\,/\,100\% & 8.17\%\,/\,100\% & 7.59\%\,/\,100\% & 6.73\%\,/\,100\% & 8.89\%\,/\,100\% & 7.93\%\,/\,100\% & 6.47\%\,/\,100\% \\
    \bottomrule
  \end{tabular}}
\end{table}

\clearpage
\section{Per-Instance Distributional Results}
\label{app:instance_distributions}

This section reports the distribution of outcomes across the 20 instances for each benchmark, instance size, and $\alpha$ value. We compare \method against the baseline methods: SAA ($|\mathcal{S}|{=}1000$), CVaR ($|\mathcal{S}|{=}1000$), and the GB approximation.

\subsection{Feasibility Rate vs. Optimality Gap}

Figures~\ref{fig:scatter_grid_kp}--\ref{fig:scatter_grid_srcsp} plot feasibility rate against optimality gap, with columns varying $\alpha$ and rows varying the problem size, and the dashed line marking the $\alpha$ threshold.
Every solution is shown, feasible or not, and the gap is measured against the best $\alpha$-feasible objective found by any method, the reference of Table~\ref{tab:results}, so an infeasible solution that beats it appears at a negative gap.
The three benchmarks show the same pattern: \method sits near $0\%$ gap just above the threshold, SAA falls below it, and CVaR and GB clear it at a substantially larger gap.

\begin{figure}[h!]
    \centering
    \includegraphics[width=\textwidth]{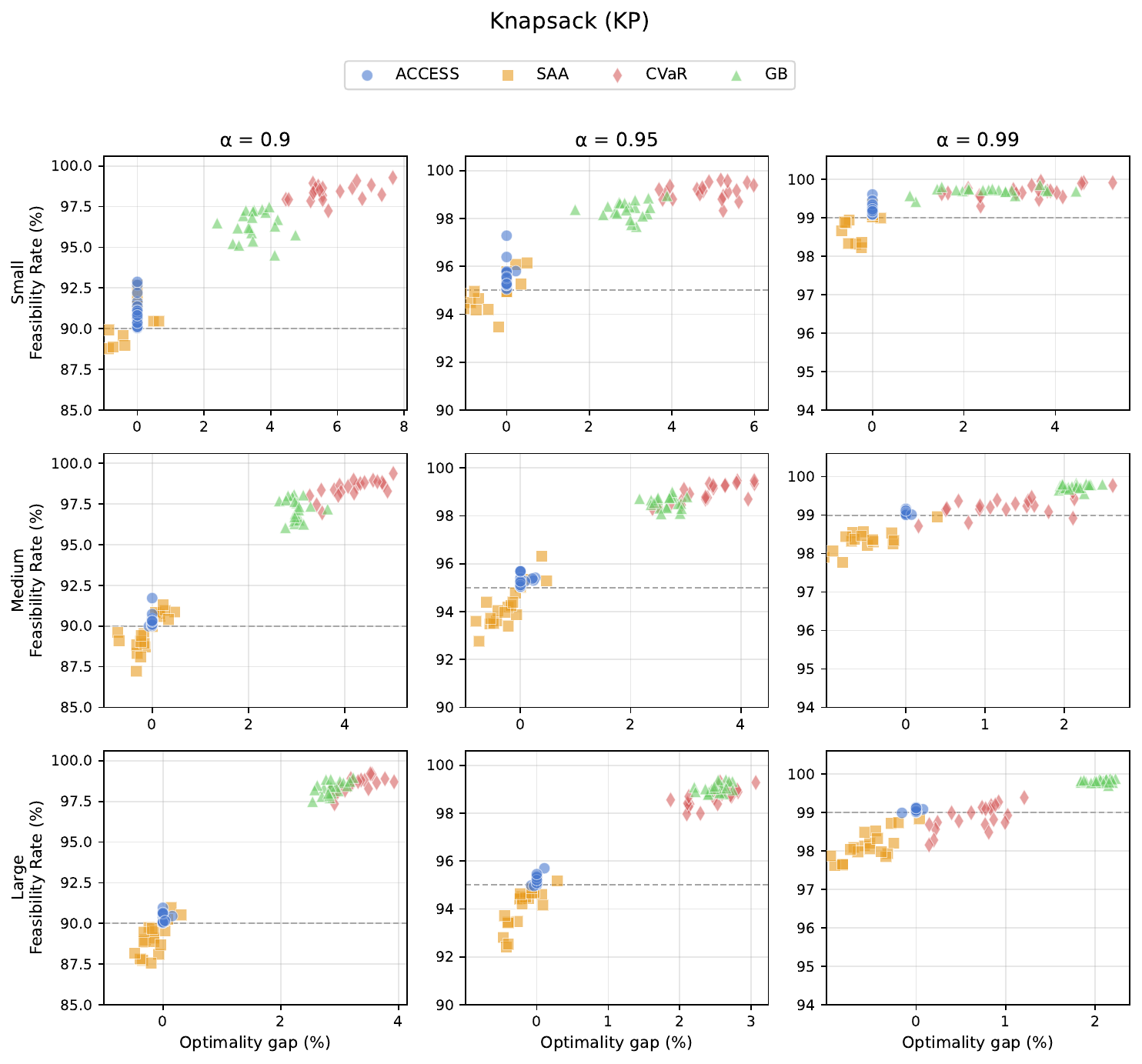}
    \caption{Per-instance feasibility rate versus optimality gap for the Knapsack Problem (KP).  Each point is one of the 20 instances, feasible or not.}
    \label{fig:scatter_grid_kp}
\end{figure}

\begin{figure}[h!]
    \centering
    \includegraphics[width=\textwidth]{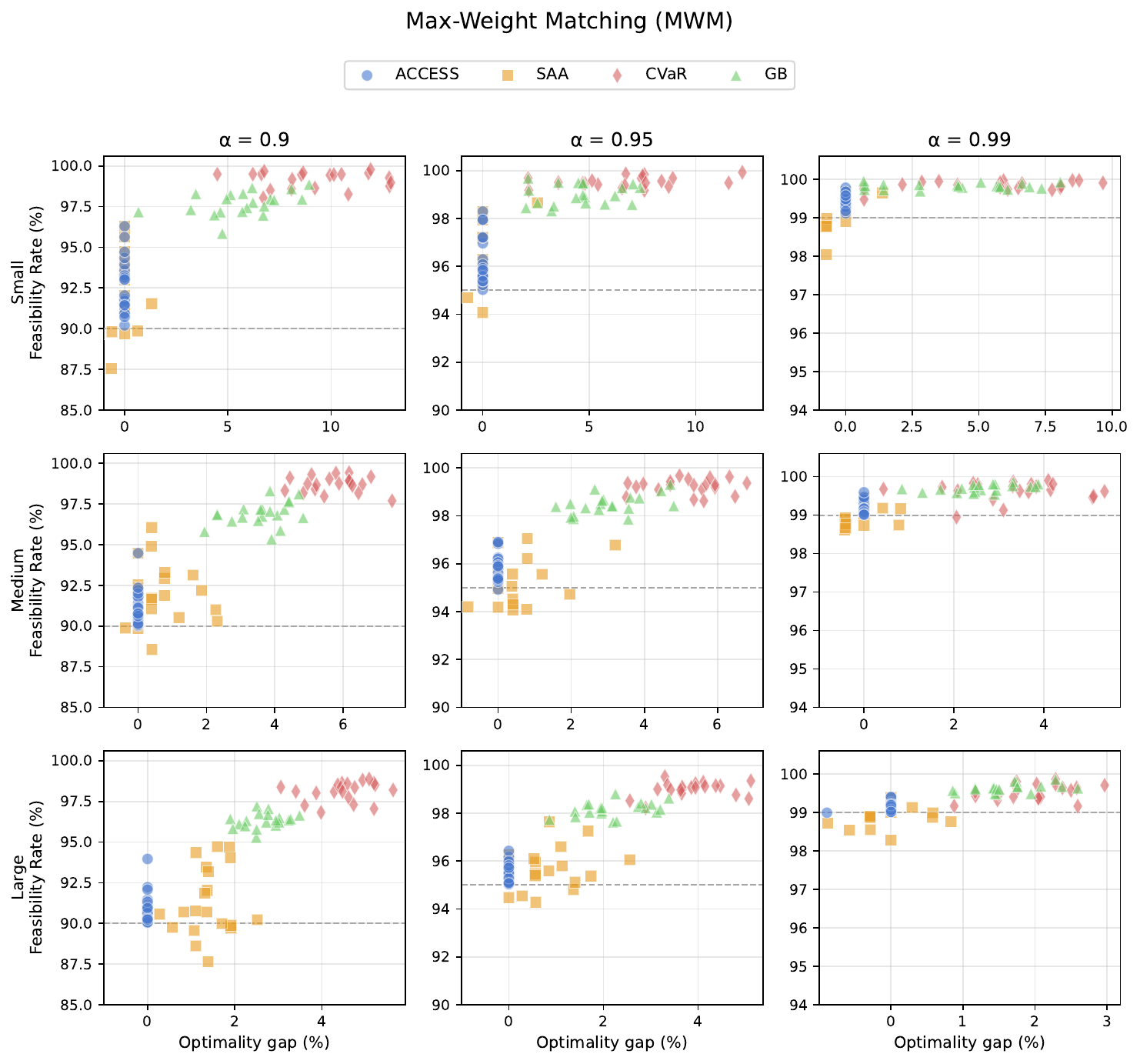}
    \caption{Per-instance feasibility rate versus optimality gap for Maximum Weight Matching (MWM).}
    \label{fig:scatter_grid_mwm}
\end{figure}

\begin{figure}[h!]
    \centering
    \includegraphics[width=\textwidth]{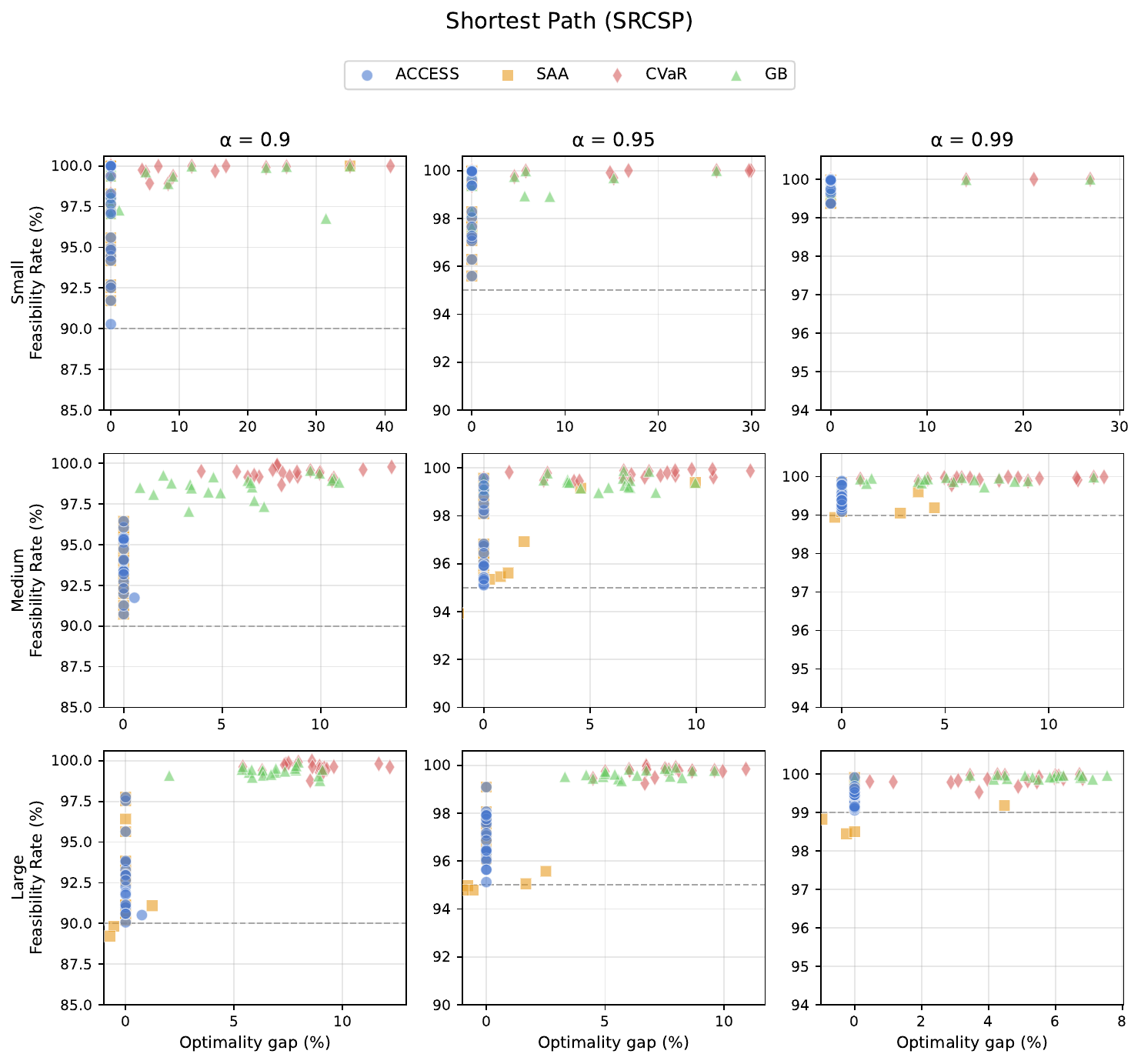}
    \caption{Per-instance feasibility rate versus optimality gap for Stochastic Resource-Constrained Shortest Path (SRCSP).}
    \label{fig:scatter_grid_srcsp}
\end{figure}

\subsection{Generation Time vs. FO-gap}

Figures~\ref{fig:time_fogap_kp}--\ref{fig:time_fogap_srcsp} compare time against FO-gap, and include only $\alpha$-feasible solutions, since runtime is not comparable for a solution that does not meet the constraint.
Again the three benchmarks agree: \method reaches near-zero FO-gap in about two minutes on KP and SRCSP and about ten on the largest MWM instances, SAA reaches the hour, and GB and CVaR are faster but pay a much larger gap.

\begin{figure}[h!]
    \centering
    \includegraphics[width=\textwidth]{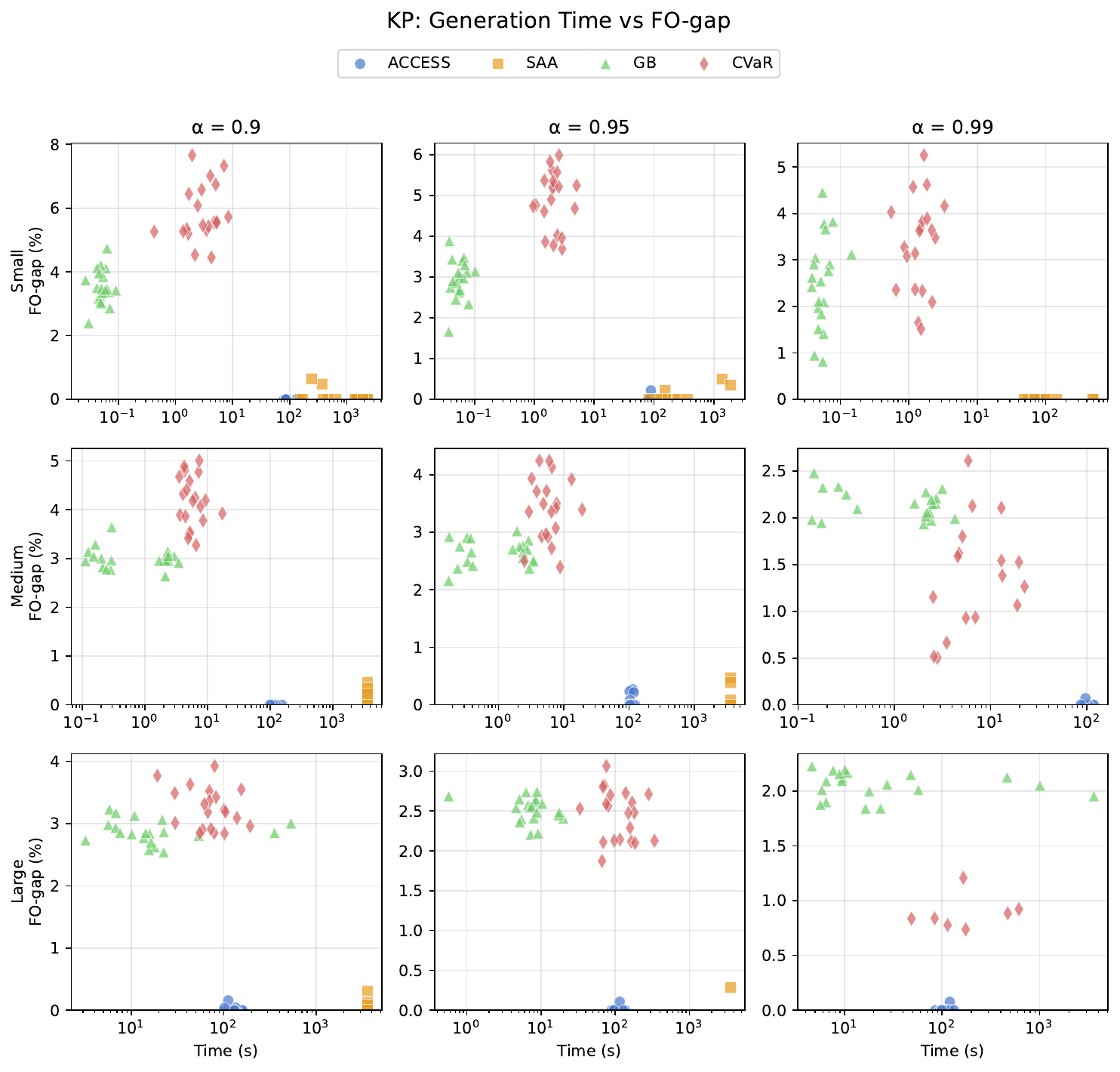}
    \caption{Generation time versus FO-gap for KP, over the $\alpha$-feasible solutions only.}
    \label{fig:time_fogap_kp}
\end{figure}

\begin{figure}[h!]
    \centering
    \includegraphics[width=\textwidth]{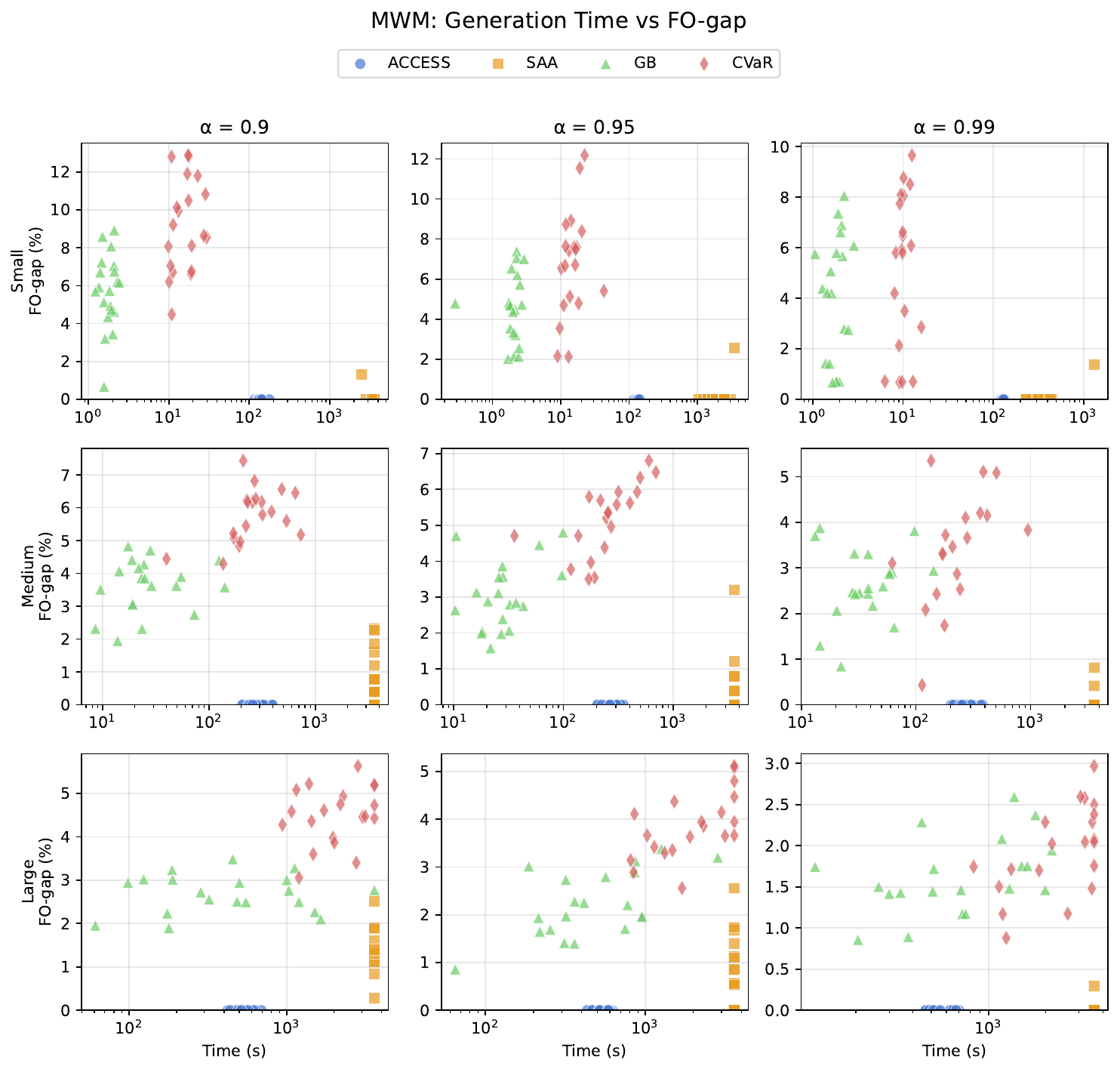}
    \caption{Generation time versus FO-gap for MWM, over the $\alpha$-feasible solutions only.}
    \label{fig:time_fogap_mwm}
\end{figure}

\begin{figure}[h!]
    \centering
    \includegraphics[width=\textwidth]{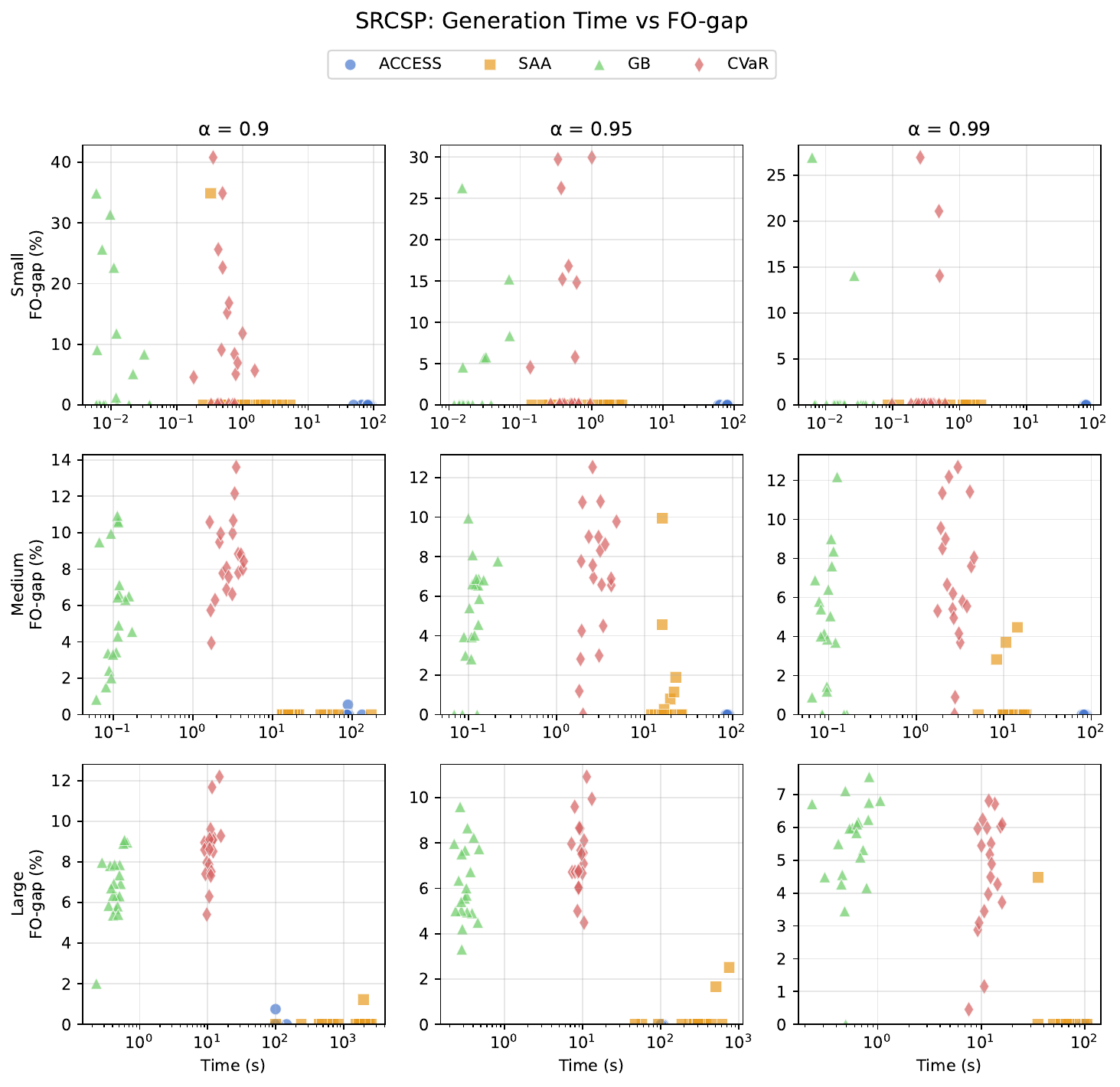}
    \caption{Generation time versus FO-gap for SRCSP, over the $\alpha$-feasible solutions only.}
    \label{fig:time_fogap_srcsp}
\end{figure}

\subsection{Feasibility Distributions: \method vs Random Sampling }
\label{app:feas_distributions} 

Figures~\ref{fig:feasibility_distributions_kp}--\ref{fig:feasibility_distributions_srcsp} compare
the per-candidate feasibility (evaluated on $10^6$ scenarios) of the \method pool with that of
$1{,}000$ random solutions per instance, on every instance size and $\alpha$, pooled across the
20 test instances.  Consistently, we can observe that \method concentrates solutions near the boundary, while random sampling often produces completely feasible or infeasible solutions.  

\begin{figure}[h!]
    \centering
    \includegraphics[width=\textwidth]{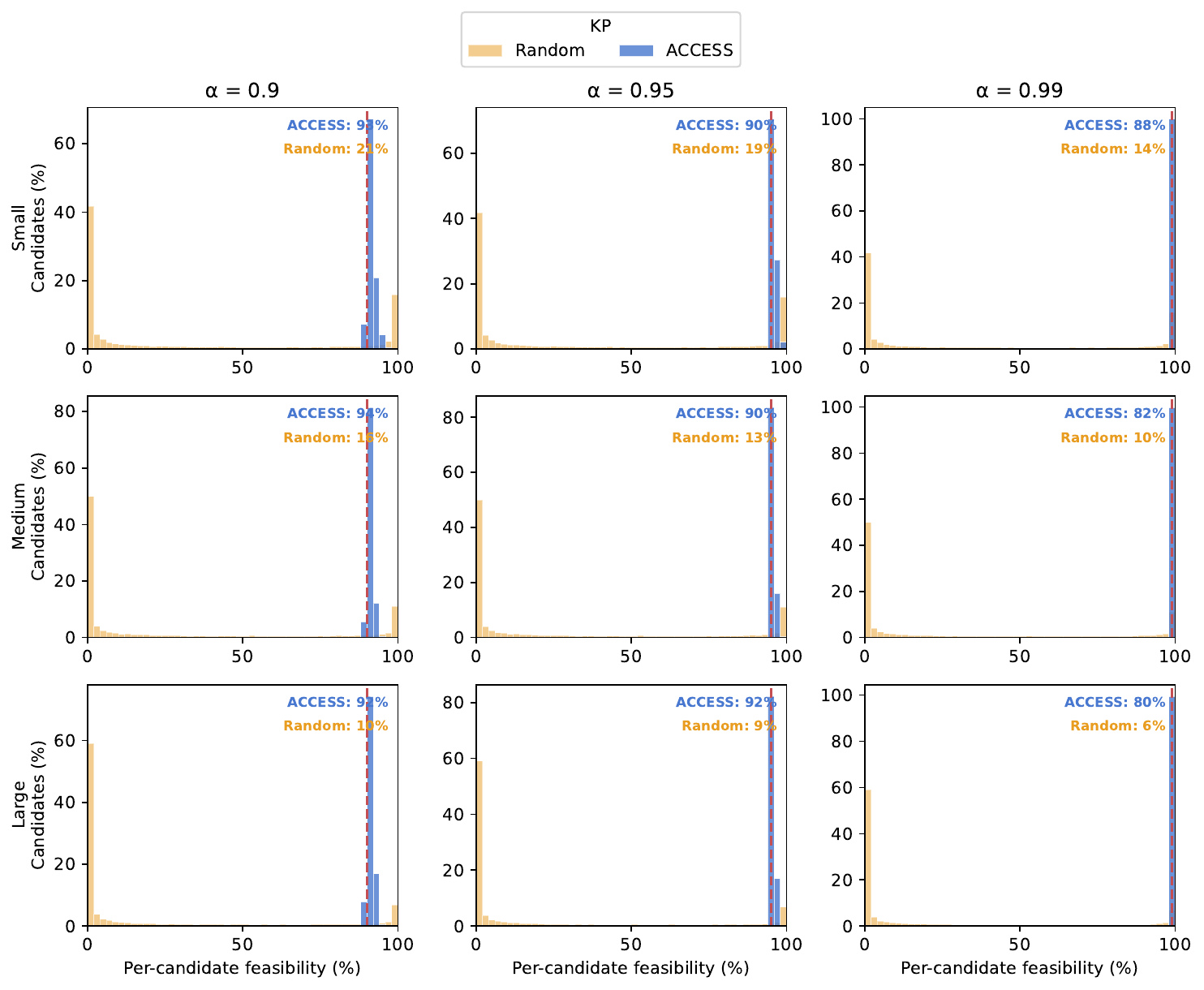}
    \caption{Feasibility distributions for Knapsack (KP).  Rows are instance sizes and columns are $\alpha$.}
    \label{fig:feasibility_distributions_kp}
\end{figure}

\begin{figure}[h!]
    \centering
    \includegraphics[width=\textwidth]{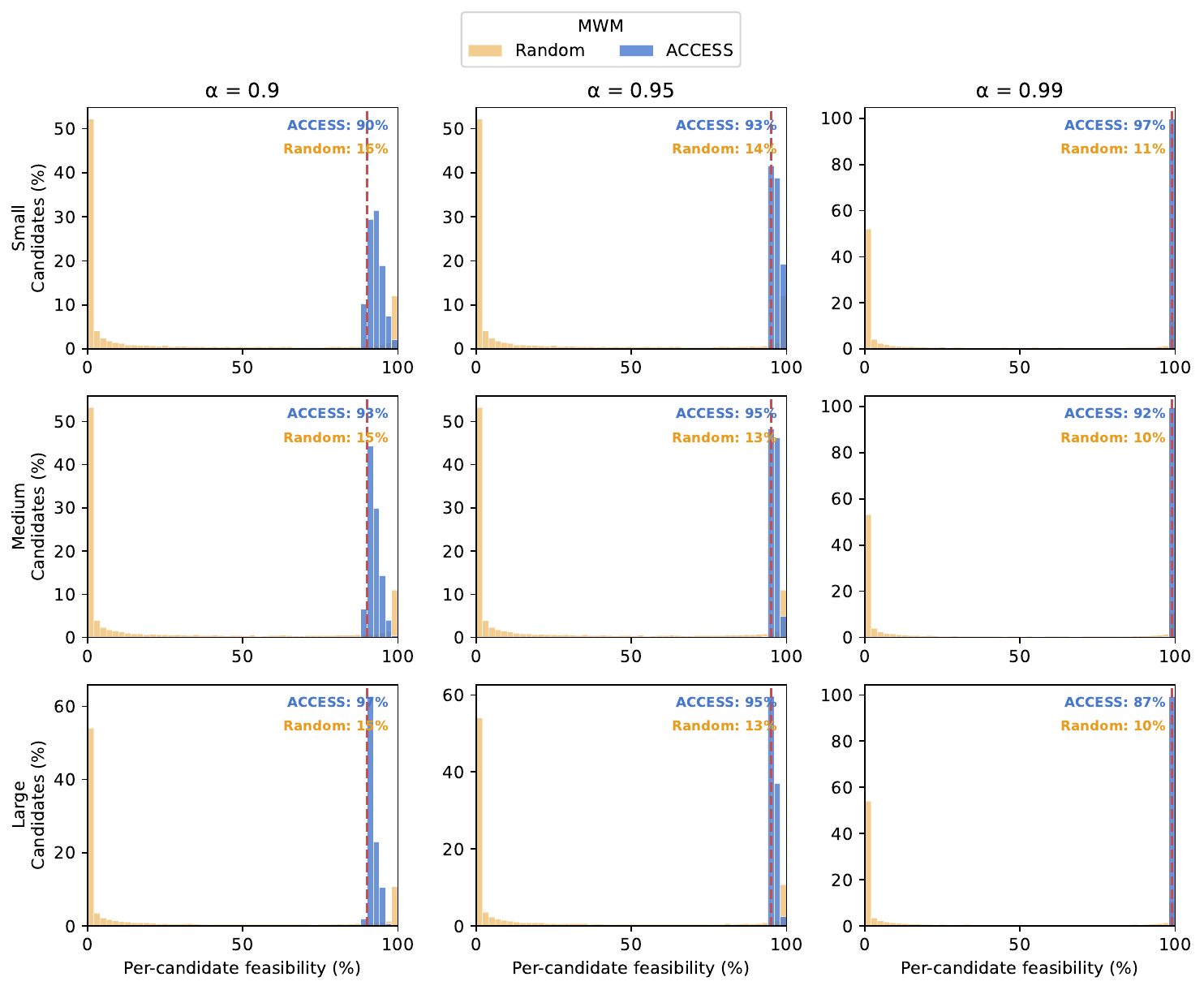}
    \caption{Feasibility distributions for Maximum Weight Matching (MWM).}
    \label{fig:feasibility_distributions_mwm}
\end{figure}

\begin{figure}[h!]
    \centering
    \includegraphics[width=\textwidth]{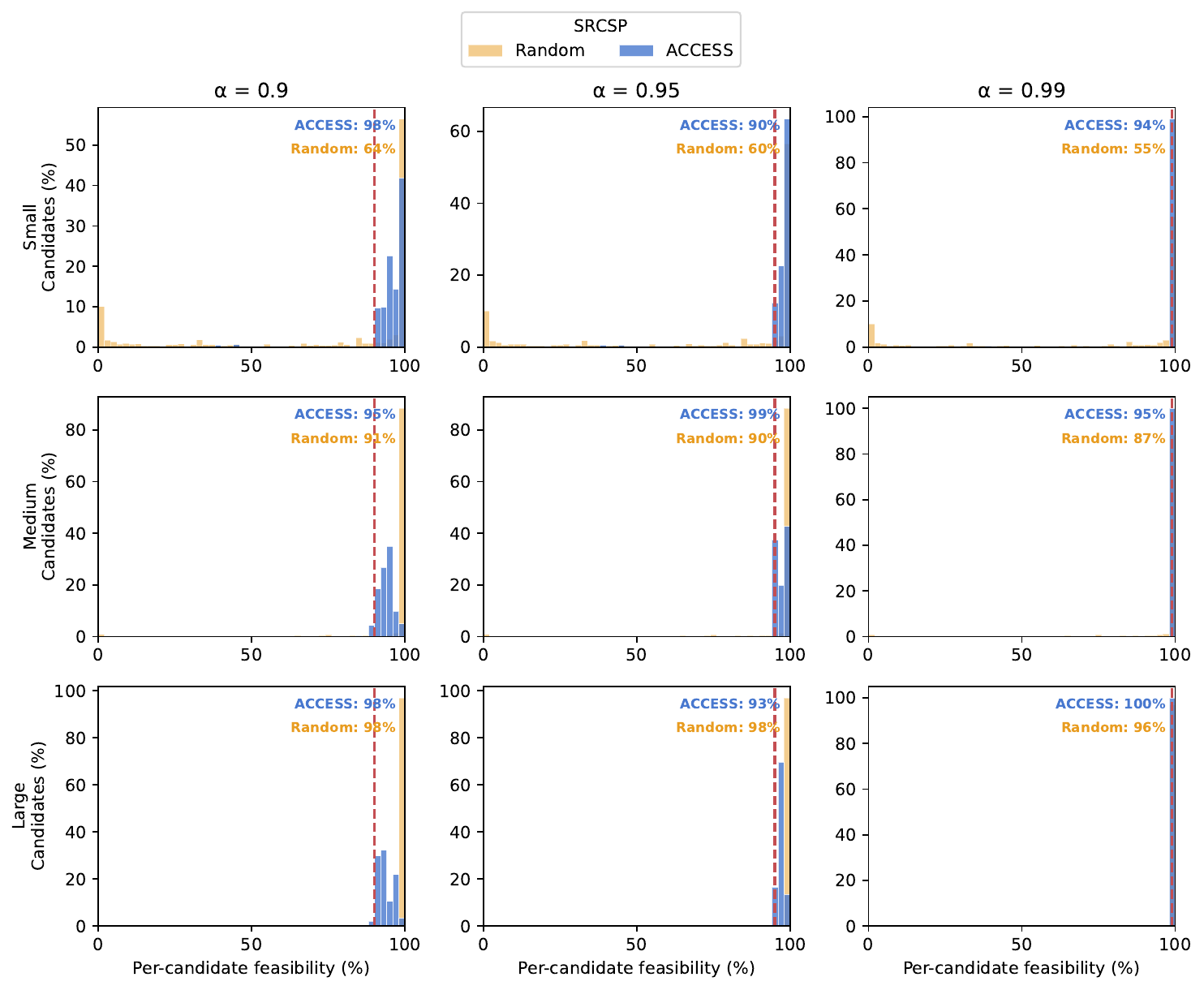}
    \caption{Feasibility distributions for Stochastic Resource-Constrained Shortest Path (SRCSP).}
    \label{fig:feasibility_distributions_srcsp}
\end{figure}

\clearpage
\section{Detailed Runtime Results}
\label{app:detailed_runtime}

Table~\ref{tab:runtime_detailed} provides detailed generation time statistics (mean across 20 test instances) for each benchmark, size, and $\alpha$ value. For \method, times are shown as generation/feasibility to separate the BO search from the final constraint verification.
Feasibility checking is $2$--$13\%$ of the total, so a larger evaluation set, and with it a tighter certificate, costs little.

\begin{table}[h!]
  \centering
  \caption{Generation time in seconds (mean across 20 instances).
    \method times shown as generation/feasibility.}
  \label{tab:runtime_detailed}
  \small
  \resizebox{\textwidth}{!}{%
  \setlength\tabcolsep{3pt}
  \begin{tabular}{ll rrr rrr rrr}
    \toprule
    & & \multicolumn{3}{c}{Small}
      & \multicolumn{3}{c}{Medium}
      & \multicolumn{3}{c}{Large} \\
    \cmidrule(lr){3-5}\cmidrule(lr){6-8}\cmidrule(lr){9-11}
    & Method & $\alpha{=}.90$ & $\alpha{=}.95$ & $\alpha{=}.99$
      & $\alpha{=}.90$ & $\alpha{=}.95$ & $\alpha{=}.99$
      & $\alpha{=}.90$ & $\alpha{=}.95$ & $\alpha{=}.99$ \\
    \midrule
    \multirow{4}{*}{\textbf{KP}}
      & \method & 87.3/2.7 & 85.7/2.8 & 81.6/2.8 & 111.1/2.7 & 109.8/2.8 & 103.2/2.8 & 122.9/16.1 & 109.6/15.9 & 106.0/15.8 \\
      & SAA & 1082 & 440 & 199 & 3601 & 3601 & 3561 & 3600 & 3600 & 3600 \\
      & GB & 0.1 & 0.1 & 0.1 & 1.3 & 1.6 & 1.7 & 58 & 8.6 & 267 \\
      & CVaR & 3.5 & 2.3 & 1.6 & 6.3 & 6.6 & 9.1 & 81 & 130 & 256 \\
    \midrule
    \multirow{4}{*}{\textbf{MWM}}
      & \method & 131.3/6.7 & 130.7/6.4 & 127.0/6.7 & 281.6/15.3 & 276.5/14.8 & 271.3/14.4 & 526.7/22.9 & 543.2/23.6 & 557.3/22.4 \\
      & SAA & 3397 & 2082 & 350 & 3600 & 3600 & 3600 & 3600 & 3600 & 3600 \\
      & GB & 1.8 & 2.1 & 1.9 & 37 & 34 & 45 & 735 & 642 & 906 \\
      & CVaR & 17 & 16 & 10 & 301 & 289 & 268 & 2249 & 2245 & 2622 \\
    \midrule
    \multirow{4}{*}{\textbf{SRCSP}}
      & \method & 81.4/2.5 & 79.3/2.5 & 75.6/2.6 & 89.7/3.5 & 87.7/3.2 & 85.2/3.3 & 103.0/9.0 & 99.9/8.9 & 96.7/9.1 \\
      & SAA & 1.5 & 1.0 & 0.8 & 32 & 18 & 12 & 1293 & 294 & 66 \\
      & GB & 0.0 & 0.0 & 0.0 & 0.1 & 0.1 & 0.1 & 0.5 & 0.3 & 0.6 \\
      & CVaR & 0.6 & 0.5 & 0.3 & 2.9 & 2.9 & 2.9 & 11 & 9.4 & 12 \\
    \bottomrule
  \end{tabular}}
\end{table}

\end{document}